\documentclass{amsart}
\usepackage{amsthm,amsmath,amsfonts}
\usepackage{mathrsfs}
\usepackage[colorlinks=false,linktocpage=true]{hyperref}
\usepackage{tocvsec2}
\usepackage{latexsym}
\hyphenation{re-spec-tive-ly sto-chastic}
\begin{document}
\title[SDDEs limits solutions]{SDDEs limits solutions to sublinear reaction-diffusion SPDEs}
\author{Hassan Allouba}
\address{Department of Mathematical Sciences,
Kent State University, Kent, Ohio 44242, USA}
\email{allouba@mcs.kent.edu}

\date{October 1, 02}
\thanks{Partially supported by grant MDA904-02-1-0083 from NSA.}
\subjclass[2000]{60H15, 35R60}
\keywords{Reaction-diffusion SPDE, SDDE, SDDE limits solutions, multiscale}

\begin{abstract}
  We start by introducing a new definition of solutions to heat-based
  SPDEs driven by space-time white noise: SDDEs
  (stochastic differential-difference equations)  limits solutions.
  In contrast to the standard direct definition of   SPDEs solutions;
  this new notion, which builds on and refines our SDDEs approach to
  SPDEs from earlier work, is entirely based on the approximating SDDEs.
  It is applicable to, and gives a multiscale view of, a variety of SPDEs.
  We extend this approach in related work to other heat-based SPDEs
  (Burgers, Allen-Cahn, and others) and to the difficult case of SPDEs with
  multi-dimensional spacial variable.  We focus here on
  one-spacial-dimensional reaction-diffusion SPDEs; and we prove the
  existence of a SDDEs limit solution to these equations under
  less-than-Lipschitz conditions on the drift and the diffusion coefficients,
  thus extending our earlier SDDEs work to the nonzero drift case.
  The regularity of this solution is obtained  as a by-product
  of the existence estimates.  The uniqueness in law of our SPDEs follows,
  for a large class of such drifts/diffusions, as a simple extension of our
  recent Allen-Cahn uniqueness result.  We also examine briefly,
  through order parameters $\epsilon_1$ and $\epsilon_2$ multiplied by the
  Laplacian and the noise, the effect of letting
  $\epsilon_1,\epsilon_2\to 0$ at different speeds.  More precisely,
  it is shown that the ratio $\epsilon_2/\epsilon_1^{1/4}$ determines
  the behavior as $\epsilon_1,\epsilon_2\to 0$.
\end{abstract}

\maketitle
\numberwithin{equation}{section}
\newtheorem{thm}{Theorem}[section]
\newtheorem{cor}[thm]{Corollary}
\newtheorem{lem}[thm]{Lemma}
\newtheorem{prop}[thm]{Proposition}
\newtheorem{defn}[thm]{Definition}
\newtheorem{rem}[thm]{Remark}
\allowdisplaybreaks
\font\tenscrpt=eusm10
\font\sevenscrpt=eusm10 scaled 700
\font\fivescrpt=eusm10 scaled 500
\newfam\eusmfam
\textfont\eusmfam=\tenscrpt
\scriptfont\eusmfam=\sevenscrpt
\scriptscriptfont\eusmfam=\fivescrpt
\def\eus#1{{\fam\eusmfam\relax#1}}

\section{Introduction and statements of results}

We consider the parametrized space-time white noise driven SPDE on
$\eus{R}_{T}\overset{\triangle}{=} \mathbb{T}\times\mathbb{R}
=[0,T]\times\mathbb{R}$:
\begin{equation}
\begin{gathered}
\frac{\partial U_{\epsilon_1,\epsilon_2}}{\partial t}
=\frac{\epsilon_1}2\Delta U_{\epsilon_1,\epsilon_2}+b(U_{\epsilon_1,\epsilon_2})
+\epsilon_2a(U_{\epsilon_1,\epsilon_2}) \frac{\partial^2 W}{\partial t\partial x};
\quad (t,x)\in(0,T]\times\mathbb{R}, \\
 U_{\epsilon_1,\epsilon_2}(0,x)=\xi(x) ; \quad x\in \mathbb{R},
\end{gathered} \label{G}
\end{equation}
where $T>0$ is fixed but arbitrary, $\Delta$ is the Laplace operator
in space, and $a,b:\mathbb{R}\to\mathbb{R}$ are Borel
measurable.  $W(t,x)$ is the Brownian sheet corresponding to the driving
space-time white noise--with intensity Lebesgue measure--written formally as
$\partial^2W/\partial t\partial x$.  As in Walsh \cite{WA},
white noise is regarded as a continuous orthogonal martingale
measure, which we denote by $\eus{W}$, with the corresponding Brownian sheet
as the random field induced by $\eus{W}$ in the usual way.  $\xi(x)$ is
taken to be a continuous bounded deterministic function.
The parameters $\epsilon_1,\epsilon_2>0$ are order parameters, which allow
us to control the competing effects of the Laplacian
$\Delta$ and the driving space-time white noise
$\partial^2W/\partial t\partial x$.  We denote the SPDE in \eqref{G} by
$e^{\epsilon_1,\epsilon_2}_{\mbox{\tiny heat}} (a,b,\xi)$

Before going into the statements of our results, let's highlight one of the main features of this article.
We introduce and formalize the notion of SDDEs limit solutions (see Definition \ref{limitsolns} and Remark \ref{defnrem} below), and most of our treatment here focuses on this class of solutions to
$e^{\epsilon_1,\epsilon_2}_{\mbox{\tiny heat}} (a,b,\xi)$ whose elements are limits of an approximating sequence of stochastic-differential-difference
equations (SDDEs) (see \cite{ASDDE1, AD}).  SDDEs are obtained from $e^{\epsilon_1,\epsilon_2}_{\mbox{\tiny heat}} (a,b,\xi)$ by discretizing space but leaving time continuous.
We have used SDDEs before in \cite{AD,ASDDE1} to give a non-nonstandard proof of Reimers' existence result for $e_{\mbox{\tiny heat}}(a,0,\xi)$
(the case of zero drift) when $a$ is continuous and satisfies a linear growth condition.  In addition to extending our existence
proof there to the case of nonzero continuous drift (Theorem \ref{ww}) and examining the
effects of the order parameters $\epsilon_1,\epsilon_2$ on $e^{\epsilon_1,\epsilon_2}_{\mbox{\tiny heat}} (a,b,\xi)$ (Theorem \ref{thm2}), our new definition (Definition \ref{limitsolns}) of solutions to
SPDEs as limits of approximating SDDEs
establishes a general approach of SPDEs in which solutions to the SPDE in question are defined {\it entirely }in terms
of its approximating SDDEs and their limits (more on this approach and its implications in $d$-dimensional space as well as for other
SPDEs is detailed in \cite{Adim}).
It is important to note that (even in one dimension spacial variable) this is different from, and has several
advantages not shared with, the traditional direct approach (in which solutions to $e^{\epsilon_1,\epsilon_2}_{\mbox{\tiny heat}} (a,b,\xi)$ are understood in the sense of either
the standard test function \eqref{TFF} or Green function formulations \eqref{GFF} of  $e^{\epsilon_1,\epsilon_2}_{\mbox{\tiny heat}} (a,b,\xi)$).
In addition to the obvious numerical advantage;  1.~this is a multiscale approach which
allows us to view the model under consideration in two different scales simultaneously (the microscopic-in-space SDDE scale and their limiting SPDE scale),
and thus be able to see which properties of the SPDEs being approximated is captured by their SDDEs and which ones are different (e.g.~Proposition \ref{Expbd},
Lemma \ref{boundsonsdifferences}, and Lemma \ref{boundsontdifferences} show how some regularity properties for $e^{\epsilon_1,\epsilon_2}_{\mbox{\tiny heat}} (a,b,\xi)$ are
captured by their SDDEs, and the proof of Theorem \ref{thm2} part (ii) is the same for both $e^{\epsilon_1,\epsilon_2}_{\mbox{\tiny heat}} (a,b,\xi)$ and the corresponding SDDEs, while the proof
of Theorem \ref{thm2} part (i) is an example of an argument which holds in the continuous setting but not in the SDDE one, possibly pointing to different behaviors
in the two different scales),  2.~the role of the heat Green function for $e^{\epsilon_1,\epsilon_2}_{\mbox{\tiny heat}} (a,b,\xi)$ is played by a continuous-time random walk density in the case
of SDDEs, allowing us to use powerful and simpler random walk arguments like coupling (see Lemma \ref{5thinequality}) and others to get needed estimates to prove existence and
regularity for $e^{\epsilon_1,\epsilon_2}_{\mbox{\tiny heat}} (a,b,\xi)$ under mild conditions on $a$ and $b$ (more on the intimate connection between the Green function and the random walk density is also
detailed in \cite{Adim}), and 3.~unlike the usual Green's function formulation in the direct approach to SPDEs, these SDDEs
make sense as {\it real-valued} random fields in any spatial dimension $d$, and we use them in \cite{Adim} to extend our
Definition \ref{limitsolns} below to a class of solutions for SPDEs in any dimension.  In this last regard, SDDEs limit solutions to SPDEs are similar to
Lions-Crandal notion of viscosity solutions in that they are defined as limits of more ``regular'' solutions.

Consider the sequence of lattices ${(\mathbb{X}_n)\,}_{n=1}^{\infty}$ defined
by
\begin{equation*}
\mathbb{X}_n \overset{\triangle}{=}  \{\cdots, -2\delta_n, -\delta_n, 0, \delta_n, 2\delta_n, \cdots \},
\end{equation*}
where $\delta_n\to0$ as $n\to\infty$.  Then, following \cite{ASDDE1}, $e^{\epsilon_1,\epsilon_2}_{\mbox{\tiny heat}} (a,b,\xi)$
may be approximated by the sequence of SDDEs $\{e_{\mbox{\tiny heat}}^{\mbox{\tiny SDDE}}(a,b,\xi,n)\}_{n\in\mathbb{N}}$:
\begin{equation}
\begin{gathered}
\begin{aligned}
 d\tilde{U}^x_{n,\epsilon_1,\epsilon_2}(t)
 &= \big[\frac{\epsilon_1}{2}\Delta_n\tilde{U}^x_{n,\epsilon_1,\epsilon_2}(t)
 +b(\tilde{U}^x_{n,\epsilon_1,\epsilon_2}(t))\big]dt\\
 &\quad +\epsilon_2a(\tilde{U}^x_{n,\epsilon_1,\epsilon_2}(t))
 \dfrac{dW_n^x(t)}{\delta_n^{1/2}};
 \quad (t,x)\in(0,T]\times\mathbb{X}_n, \end{aligned}\\
\Tilde{U}_{n,\epsilon_1,\epsilon_2}^x(0) = \xi(x); \quad x\in\mathbb{X}_n,
\end{gathered} \label{SDDE}
\end{equation}
where $\Delta_n f(x)$ is the $n$-th discrete Laplacian
\begin{equation}
\Delta_nf(x) = \frac{f(x+\delta_n)-2f(x)+f(x-\delta_n)}{\delta_n^2}.
\label{LG}
\end{equation}
For each $n\in\mathbb{N}$, we think of $W_n^x(t)$ as a sequence of independent standard Brownian motions
indexed by the set $\mathbb{X}_n$ (independence within the same lattice).  We also
assume that if $m\neq n$ and $x\in\mathbb{X}_m\cap\mathbb{X}_n$ then $W_m^x(t)=W_n^x(t)$, and if $n>m$ and
$x\in\mathbb{X}_n\setminus\mathbb{X}_m$ then $W_m^x(t)=0$.

\begin{defn} \label{dfn1.1}
Fix $\epsilon_1,\epsilon_2$.  A solution to the SDDE system
$\{e_{\mbox{\tiny heat}}^{\mbox{\tiny SDDE}}(a,b,\xi,n)\}_{n=1}^\infty$;
 with respect to the
Brownian (in $t$) system $\{W_n^x(t)\}_{(n,x)\in\mathbb{N}\times\mathbb{X}_n}$
on the filtered probability space $(\Omega,\eus{F}\{\eus{F}_t\},\mathbb{P})$;
is a sequence of real-valued processes
$$
\left\{\tilde{U}_{n,\epsilon_1,\epsilon_2}
=\{\tilde{U}^x_{n,\epsilon_1,\epsilon_2}(t);(t,x)\in(\mathbb{T}\times\mathbb{X}_n)\}
\right\}_{n=1}^\infty
$$
with continuous sample paths in $t$ for each fixed $x\in\mathbb{X}_n$ and
$n\in\mathbb{N}$ such that,
for every $(n,x)\in\mathbb{N}\times\mathbb{X}_n$,
$\tilde{U}^x_{n,\epsilon_1,\epsilon_2}(t)$ is $\eus{F}_t$-adapted, and
\begin{equation}
\begin{aligned}
\tilde{U}^x_{n,\epsilon_1,\epsilon_2}(t)
&=\int_0^t\left[\frac{\epsilon_1}{2}\Delta_n\tilde{U}^{x}_{n,\epsilon_1,
\epsilon_2}(s)+b(\tilde{U}^{x}_{n,\epsilon_1,\epsilon_2}(s))\right]ds
+\epsilon_2a(\tilde{U}^{x}_{n,\epsilon_1,\epsilon_2}(s))
\frac{dW_n^x(s)}{\delta_n^{1/2}} \\
&\quad+\xi(x);\quad(t,x)\in\mathbb{T}\times\mathbb{X}_n,\ n\in\mathbb{N},\mbox{ a.s. }\mathbb{P}.
\end{aligned} \label{ISDDE}
\end{equation}
A solution is said to be strong if
$\{W_n^x(t)\}_{(n,x)\in\mathbb{N}\times\mathbb{X}_n}$ and
$(\Omega,\eus{F}\{\eus{F}_t\},\mathbb{P})$ are fixed a priori$;$
and with
\begin{equation}
\eus{F}_t=\sigma\left\{\sigma\left(W_n^x(s);0\le s\le t,x\in\mathbb{X}_n,n\in\mathbb{N}\right)\cup
\eus{N}\right\};\quad t\in\mathbb{T},
\label{filt}
\end{equation}
where $\eus{N}$ is the collection of null sets
$$
\Big\{O:\exists\,G\in\sigma\Big(
\bigcup_{t\ge0}\sigma\left(W_n^x(s);0\le s\le
t,x\in\mathbb{X}_n,n\in\mathbb{N}\right)\Big),\
O\subseteq G\ \mbox{and}\ \mathbb{P}(G)=0\Big\}
$$
A solution is termed weak if we are free to
choose $(\Omega,\eus{F}\{\eus{F}_t\},\mathbb{P})$ and the Brownian system on it and without requiring
$\eus{F}_t$ to
satisfy $\eqref{filt}$.
\end{defn}

Unless otherwise stated all filtrations are assumed to satisfy the usual conditions,
and any filtered probability space with such filtration is called a usual probability
space.

Now, as in Lemma 2.1 in \cite{ASDDE1},
we easily have the following representation and existence result for our approximating SDDEs

\begin{lem} \label{GSDDE}
Under the conditions
\begin{equation}
\begin{aligned}
&(a)\quad  a(u) \mbox{ and } b(u)\mbox{ are continuous in }u;
\quad
u\in\mathbb{R},\\
&(b)\quad  a^2(u)\le K(1+u^{2})\mbox{ and }b^2(u)\le
K(1+u^{2});\quad
u\in\mathbb{R}, \\
&(c)\quad \xi \mbox{ is continuous, nonrandom, and bounded on
}\mathbb{R},
\end{aligned} \label{acond}
\end{equation}
for some constant $K>0$, the SDDE system $\eqref{SDDE}$ is equivalent to the discrete-space
continuous-time Green function formulation
\begin{equation}
\begin{split}
\tilde{U}^x_{n,\epsilon_1,\epsilon_2}(t)
&= \sum_{y\in\mathbb{X}_n}\int_0^t {Q_{\delta_n,\epsilon_1}^{t-s;x-y}}
\Big[\epsilon_2{a(\tilde{U}_{n,\epsilon_1,\epsilon_2}^y(s))} \frac{dW_n^y(s)}
{\delta_n^{1/2}}+{b(\tilde{U}_{n,\epsilon_1,\epsilon_2}^y(s))} ds\Big]\\
&\quad +\sum_{y\in\mathbb{X}_n} {Q_{\delta_n,\epsilon_1}^{t;x-y}}\xi(y);
\quad (t,x)\in\mathbb{T}\times\mathbb{X}_n,
\end{split} \label{GSDDEeq}
\end{equation}
where $Q_{\delta_n,\epsilon_1}^{t:x}$ is the fundamental solution to the
parametrized deterministic heat equation on the lattice $\mathbb{X}_n$:
$$
\frac{du_{n,\epsilon_1}^x(t)}{dt}
= \frac{\epsilon_1}{2}\Delta_nu_{n,\epsilon_1}^x(t);
\quad (t,x)\in(0,T]\times\mathbb{X}_n.
$$
Furthermore, there is at least one weak solution to $\eqref{SDDE}$
$($and hence $\eqref{GSDDEeq})$.
\end{lem}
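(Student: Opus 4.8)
The plan is to establish the two assertions in turn: the equivalence of the SDDE system \eqref{SDDE} (equivalently, the integrated form \eqref{ISDDE}) with the discrete-space Green function formulation \eqref{GSDDEeq}, and then weak existence for \eqref{GSDDEeq}, from which weak existence for \eqref{SDDE} is immediate by the equivalence; the argument runs parallel to Lemma~2.1 of \cite{ASDDE1}. For the equivalence I would first record the elementary facts about the kernel $Q_{\delta_n,\epsilon_1}^{t;x}$: it is the transition probability $\mathbb{P}(X^n_t=x)$ of the continuous-time random walk $X^n$ on $\mathbb{X}_n$ with generator $\tfrac{\epsilon_1}{2}\Delta_n$ started at $0$, so $Q_{\delta_n,\epsilon_1}^{t;x}\ge0$, $\sum_{x\in\mathbb{X}_n}Q_{\delta_n,\epsilon_1}^{t;x}=1$, $Q_{\delta_n,\epsilon_1}^{0;x}=\mathbf{1}_{\{x=0\}}$, the semigroup identity $\sum_{z\in\mathbb{X}_n}Q_{\delta_n,\epsilon_1}^{t;x-z}Q_{\delta_n,\epsilon_1}^{s;z-y}=Q_{\delta_n,\epsilon_1}^{t+s;x-y}$ holds, $Q$ solves the lattice heat equation $\partial_t Q_{\delta_n,\epsilon_1}^{t;x}=\tfrac{\epsilon_1}{2}\Delta_nQ_{\delta_n,\epsilon_1}^{t;x}$, and --- by the Poisson tails of $X^n$ --- for each fixed $t>0$ the function $x\mapsto Q_{\delta_n,\epsilon_1}^{t;x}$ decays faster than any exponential in $|x|/\delta_n$, so $\sum_{x\in\mathbb{X}_n}(1+|x|)^pQ_{\delta_n,\epsilon_1}^{t;x}<\infty$ for every $p\ge0$. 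Given a solution $\tilde U_{n,\epsilon_1,\epsilon_2}$ of \eqref{ISDDE}, fix $(t,x)$ and apply the integration-by-parts (It\^o product) formula to $s\mapsto F(s):=\sum_{y\in\mathbb{X}_n}Q_{\delta_n,\epsilon_1}^{t-s;x-y}\tilde U^y_{n,\epsilon_1,\epsilon_2}(s)$ on $[0,t]$; the $s$-derivative of $Q_{\delta_n,\epsilon_1}^{t-s;x-y}$ produces a term which, after a discrete summation by parts (using that $\Delta_n$ is self-adjoint for counting measure on $\mathbb{X}_n$), cancels exactly the $\tfrac{\epsilon_1}{2}\Delta_n$-part of $d\tilde U^y_{n,\epsilon_1,\epsilon_2}(s)$, leaving $dF(s)=\sum_{y}Q_{\delta_n,\epsilon_1}^{t-s;x-y}\bigl[b(\tilde U^y_{n,\epsilon_1,\epsilon_2}(s))\,ds+\epsilon_2 a(\tilde U^y_{n,\epsilon_1,\epsilon_2}(s))\,dW_n^y(s)/\delta_n^{1/2}\bigr]$. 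Integrating over $[0,t]$ and using $Q_{\delta_n,\epsilon_1}^{0;x-y}=\mathbf{1}_{\{y=x\}}$ (so $F(t)=\tilde U^x_{n,\epsilon_1,\epsilon_2}(t)$) and $F(0)=\sum_yQ_{\delta_n,\epsilon_1}^{t;x-y}\xi(y)$ gives \eqref{GSDDEeq}; the converse is the same computation in reverse --- substitute \eqref{GSDDEeq}, differentiate using $\partial_t Q=\tfrac{\epsilon_1}{2}\Delta_nQ$, and use a stochastic Fubini together with the semigroup identity to recover \eqref{ISDDE}. All rearrangements and interchanges of the infinite $y$-sums with the time and stochastic integrals are justified by the super-exponential spatial decay of $Q$, the linear growth \eqref{acond}(b), the boundedness of $\xi$ from \eqref{acond}(c), and the a priori bound $\sup_{x\in\mathbb{X}_n}\mathbb{E}|\tilde U^x_{n,\epsilon_1,\epsilon_2}(t)|^2<\infty$, which a Gronwall argument applied to either formulation delivers at once.

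For existence I would pass through the Lipschitz case and then a compactness argument. By mollification pick $a_m,b_m\in C(\mathbb{R})$ Lipschitz with $a_m\to a$, $b_m\to b$ locally uniformly and $a_m^2(u)\vee b_m^2(u)\le K'(1+u^2)$ for all $m,u$, with $K'$ independent of $m$. For Lipschitz coefficients, \eqref{GSDDEeq} (with $a,b$ replaced by $a_m,b_m$) has a unique solution $\tilde U^{(m)}$ obtained by Picard iteration in the Banach space of $\eus{F}_t$-adapted processes $V=\{V^x(\cdot)\}_{x\in\mathbb{X}_n}$ normed by $\|V\|_T:=\sup_{x\in\mathbb{X}_n,\,s\le T}(\mathbb{E}|V^x(s)|^2)^{1/2}$: because $Q_{\delta_n,\epsilon_1}^{r;\cdot}$ is a probability kernel (so $\sum_yQ_{\delta_n,\epsilon_1}^{r;x-y}=1$ and $\sum_y(Q_{\delta_n,\epsilon_1}^{r;x-y})^2\le1$), the spatial sums in \eqref{GSDDEeq} do not amplify, and the Burkholder--Davis--Gundy and Cauchy--Schwarz estimates together with the Lipschitz bounds close by Gronwall uniformly in $x$. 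By the equivalence just proved, $\tilde U^{(m)}$ is also a (strong) solution of \eqref{SDDE} with $a_m,b_m$ on the given usual probability space and Brownian system. The $m$-uniform linear growth together with Gronwall then yields, for every $p\ge1$, $\sup_m\sup_{x\in\mathbb{X}_n}\mathbb{E}\sup_{t\le T}|\tilde U^{(m),x}(t)|^{2p}<\infty$ and $\sup_m\mathbb{E}|\tilde U^{(m),x}(t)-\tilde U^{(m),x}(s)|^{2p}\le C_{n,p}|t-s|^p$ for all $s,t\in\mathbb{T}$ and $x\in\mathbb{X}_n$.

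Then tightness, Skorokhod, and identification finish the proof. By the increment bounds and Kolmogorov's criterion each coordinate family $\{\tilde U^{(m),x}(\cdot)\}_m$ is tight in $C(\mathbb{T};\mathbb{R})$, hence $\{\tilde U^{(m)}\}_m$ is tight in the Polish space $C(\mathbb{T};\mathbb{R})^{\mathbb{X}_n}$ with the product topology, while the Brownian system is the same for all $m$, so trivially tight. By Prokhorov's theorem and the Skorokhod representation theorem there are, along a subsequence, processes $(\hat U^{(m)},\hat W^{(m)})$ on a new usual probability space, with the same joint laws, converging a.s.\ (coordinatewise, uniformly on $\mathbb{T}$) to some $(\hat U,\hat W)$ with $\hat W$ again an independent Brownian system and $\hat U$ adapted to the filtration it generates; and $\hat U^{(m)}$ solves \eqref{GSDDEeq} with $a_m,b_m$ with respect to $\hat W^{(m)}$. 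Passing to the limit in \eqref{GSDDEeq} coordinate by coordinate, the drift term converges by the continuity of $b$, the locally uniform convergence $b_m\to b$, the a.s.\ convergence of $\hat U^{(m),y}$, the moment bounds, and dominated convergence (the $Q$-decay providing the spatial domination), while for the martingale term the a.s.\ convergence of the integrands and the uniform $L^2$ bounds let one pass to the limit inside the infinite sum of stochastic integrals in the standard way. Hence $\hat U$ solves \eqref{GSDDEeq}, and by the equivalence it solves \eqref{SDDE}: a weak solution in the sense of Definition~\ref{dfn1.1}.

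The step I expect to be the main obstacle is the bookkeeping forced by the infinite system --- ensuring that every $y$-sum (in the equivalence, in the Picard contraction, and especially in the final passage to the limit inside an infinite sum of stochastic integrals) converges and commutes with the limits. Throughout, this is controlled by the super-exponential spatial decay of $Q_{\delta_n,\epsilon_1}^{t;x}$ for fixed $t$ and the $m$- and $x$-uniform moment estimates coming from \eqref{acond}(b) and Gronwall; everything else is routine, which is why this is ``as in Lemma~2.1 of \cite{ASDDE1}.''
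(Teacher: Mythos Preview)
Your proposal is correct and is precisely the kind of ``standard SDEs argument'' the paper invokes: the paper gives no proof of this lemma, stating only that the equivalence follows as in Lemma~2.1 of \cite{ASDDE1} (via a test-function/Green-function equivalence \`a la Walsh) and that existence is a straightforward generalization of classical SDE existence, with details omitted. Your Duhamel/It\^o-product derivation of the equivalence and your Lipschitz-approximation/Picard/tightness/Skorokhod scheme for existence are exactly those standard arguments spelled out; the only stylistic difference is that the paper points to the test-function route for the equivalence rather than your direct variation-of-constants computation, and for the limit passage it is slightly cleaner to work in the SDDE form \eqref{ISDDE} (only nearest neighbors appear, so no infinite $y$-sum) rather than in \eqref{GSDDEeq}, but your control via the super-exponential decay of $Q$ is also valid.
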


Similar to the zero drift case (Lemma 2.1 in \cite{ASDDE1}), the equivalence assertion in Lemma \ref{GSDDE} follows as in the continuous time-space case (e.g., Walsh \cite{WA}) from an equivalence of test function and Green function
formulations argument, and the existence is a straightforward generalization of standard SDEs arguments and the details will be omitted.

\begin{rem} \rm
Just as in the continuous time-case, where we can look probabilistically at the fundamental solution of the deterministic
heat equation as the density of Brownian motion, we note that $Q_{\delta_n,\epsilon_1}^{t;x}$ is the density of a symmetric
$1$-dimensional random walk on $\mathbb{X}_n$, in which the times between transitions are exponentially distributed with mean $\delta_n^2/\epsilon_1$,
for all $n$.  To simplify notations, we will suppress the dependence on the parameters $\epsilon_1,\epsilon_2$ unless we want to expressly
consider the effect of variations in them on the SPDE or SDDE (the subscript
$\delta_n$ in $Q^{t;x}_{\delta_n}$ is to remind us that the lattice points are $\delta_n$ apart).
Of course, by enlarging the filtration $\eus{F}_t$, we can accomodate random
initial $\xi$.    Also, if the space $\mathbb{R}$ is replaced by a closed
bounded interval $\mathbb{L}=[a,b]$, $a,b\in\mathbb{R}$
then $\mathbb{X}_n$ is replaced by $\mathbb{X}_n\cap\mathbb{L}$;
and the random walk will be either reflected or absorbed at $a\mbox{ and }b$,
with corresponding densities $Q_{\delta_n}$,
depending on whether we have Neumann or Dirichlet boundary conditions.
\label{Qasdensity}
\end{rem}

Using linear interpolation, we extend the definition of the already continuous-in-time process $\tilde{U}^{x}_n(t)$
on $\mathbb{T}\times\mathbb{X}_n$, so as to obtain a continuous process on $\mathbb{T}\times\mathbb{R}$, for each
$n\in\mathbb{N}$, which we will also denote by $\tilde{U}^{x}_n(t)$.    Henceforth,
any such sequence $\{\tilde U_n\}$ of interpolated $\tilde U_n$'s will be called a continuous
solution to $e_{\mbox{\tiny heat}}^{\mbox{\tiny SDDE}}(a,b,\xi,n)$.
We now give our definitions of SDDEs limit solutions to $e^{\epsilon_1,\epsilon_2}_{\mbox{\tiny heat}} (a,b,\xi)$ (of course solutions on $\mathbb{R}_+\times\mathbb{R}$ are defined in the
same way, replacing  $\mathbb{T}$ with $\mathbb{R}_+$).

\begin{defn} \rm
[SDDEs limits solutions to
$e^{\epsilon_1,\epsilon_2}_{\mbox{\tiny heat}} (a,b,\xi)$]
 We say that the random field
$U(t,x)$ is a continuous SDDE limit solution to
$e^{\epsilon_1,\epsilon_2}_{\mbox{\tiny heat}} (a,b,\xi)$ on
$\mathbb{T}\times\mathbb{R}$ iff there is a continuous solution
$\{\tilde{U}^{x}_n(t)\}$ to the SDDE system $e_{\mbox{\tiny
heat}}^{\mbox{\tiny SDDE}}(a,b,\xi,n)$ on a usual probability
space $(\Omega,\eus{F}\{\eus{F}_t\},\mathbb{P})$ and with respect
to a Brownian system
$\{W_n^x(t)\}_{(n,x)\in\mathbb{N}\times\mathbb{X}_n}$ such that
$U$ has $\mathbb{P}$-a.s. continuous paths on
$\mathbb{T}\times\mathbb{R}$, and $U$ is the limit or a
modification of the limit of $\tilde U_n$ (or of a subsequence
$\tilde{U}_{n_k}$) as $n\nearrow\infty$ ($k\nearrow\infty$). When
desired; the types of the solution and the limit are explicitly
stated (e.g., we say strong (weak) SDDEs weak, in probability,
$L^p$, or a.s.~limit solution to indicate that the solution to the
approximating SDDEs system is strong (weak) and that the limit of
the SDDEs is in the weak, in probability, $L^p$, or a.s.~sense,
respectively). We say that uniqueness in law holds if whenever $U$
and $V$ are SDDEs limit solutions, $U$ and $V$ have the same law.
\label{limitsolns}
\end{defn}

\begin{rem} \rm
Although in this article we restrict our treatment to the weak SDDEs weak limit solutions and weak uniqueness (in law),
Definition \ref{limitsolns} easily admits limits in any sense, not only those mentioned above,
as well as stronger uniqueness (pathwise).
\label{defnrem}
\end{rem}

Next, we use the existence in Lemma \ref{GSDDE} to show the existence of a SDDE limit
solution to $e^{\epsilon_1,\epsilon_2}_{\mbox{\tiny heat}} (a,b,\xi)$, and that this solution is a solution in the standard sense of satisfying the test function formulation
of $e^{\epsilon_1,\epsilon_2}_{\mbox{\tiny heat}} (a,b,\xi)$.  This complements the results in \cite{ASDDE1} by treating the case of continuous drift which grows no more
than linearly.

\begin{thm} [Weak existence, uniqueness, and regularity]
Fix $\epsilon_1$ and $\epsilon_2$.  If the conditions in \eqref{acond} hold for some constant $K>0$, then every sequence of continuous SDDEs solutions is tight in
$C(\mathbb{T}\times\mathbb{R};\mathbb{R})$ and we have a weak SDDE weak limit solution $U$ to
$e^{\epsilon_1,\epsilon_2}_{\mbox{\tiny heat}} (a,b,\xi)$.   Moreover, there is a continuous random field $Y,$ with the same law as $U,$ that satisfies the test function formulation \
of $e^{\epsilon_1,\epsilon_2}_{\mbox{\tiny heat}} (a,b,\xi)$:
\begin{equation}
\begin{split}
&(Y(t)-\xi,\varphi)-\frac{\epsilon_1}{2}\int_0^t(Y(s),\varphi'')ds-\int_0^t(b(Y(s)),\varphi)ds\\
&=  \epsilon_2\int_0^t \int_{\mathbb{R}}a(Y(s,x))\varphi(x)\eus{W}(dx,ds)
\end{split}
\label{TFF}
\end{equation}
for every $\varphi\in C_c^{\infty}(\mathbb{R};\mathbb{R})$, where $(\cdot,\cdot)$ denotes the scalar product on $L^2(\mathbb{R})$.  The continuous paths of $Y$ are H\"older $\gamma_s\in(0,\frac12)$ in space and H\"older $\gamma_t\in(0,\frac14)$ in
time and are $L^p$ bounded for every $p\ge2$.  If $a(u)=u^\gamma$, with $1/2\le\gamma\le1$ and
$b(u)=\sum_{i=1}^N c_iu^{\alpha_i}$ for constants $c_i\in\mathbb{R}$, $N\in\mathbb{N}$, and $1\ge\alpha_i\ge\gamma$, $i=1,\ldots,N;$ then
uniqueness in law holds for $e^{\epsilon_1,\epsilon_2}_{\mbox{\tiny heat}} (a,b,\xi)$ on $[0,T]\times[0,L]$, for any $T,L\in\mathbb{R}_+^2$, and hence the convergence to $U$ is along the whole
SDDEs sequence.
\label{ww}
\end{thm}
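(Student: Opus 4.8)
The plan is to run everything through the discrete Green-function representation \eqref{GSDDEeq} of Lemma \ref{GSDDE}, treating the random-walk density $Q_{\delta_n}$ (Remark \ref{Qasdensity}) as the discrete surrogate of the heat kernel. \emph{Step 1 (uniform moments).} I would first prove $\sup_{n}\sup_{(t,x)}\mathbb{E}|\tilde U^{x}_{n}(t)|^{p}<\infty$ for each $p\ge2$ (Proposition \ref{Expbd}). Raising \eqref{GSDDEeq} to the $p$-th power, using Burkholder--Davis--Gundy on the martingale term, Jensen on the drift term, and the linear growth in \eqref{acond}(b), one obtains $\sup_{x}\mathbb{E}|\tilde U^{x}_{n}(t)|^{p}\le C_{1}+C_{2}\int_{0}^{t}(t-s)^{-1/2}\sup_{x}\mathbb{E}|\tilde U^{x}_{n}(s)|^{p}\,ds$; the decisive input is the $n$-uniform bound $\sum_{y\in\mathbb{X}_{n}}(Q^{r;x-y}_{\delta_{n}})^{2}/\delta_{n}\le Cr^{-1/2}$ (off a harmless short-time plateau), which comes from Chapman--Kolmogorov, $\sum_{y}(Q^{r;x-y}_{\delta_{n}})^{2}=Q^{2r;0}_{\delta_{n}}$, and a local limit estimate $Q^{2r;0}_{\delta_{n}}\le C\delta_{n}r^{-1/2}$. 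A singular Gronwall lemma closes this with an $n$-independent constant.

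\emph{Step 2 (increments and tightness).} Next I would establish, uniformly in $n$, $\mathbb{E}|\tilde U^{x_{1}}_{n}(t)-\tilde U^{x_{2}}_{n}(t)|^{p}\le C|x_{1}-x_{2}|^{p/2}$ and $\mathbb{E}|\tilde U^{x}_{n}(t_{1})-\tilde U^{x}_{n}(t_{2})|^{p}\le C|t_{1}-t_{2}|^{p/4}$ for large $p$ (Lemmas \ref{boundsonsdifferences}, \ref{boundsontdifferences}), again from \eqref{GSDDEeq}, BDG and \eqref{acond}(b), the new point being $n$-uniform control of $\int_{0}^{t}\sum_{y}(Q^{t-s;x_{1}-y}_{\delta_{n}}-Q^{t-s;x_{2}-y}_{\delta_{n}})^{2}\delta_{n}^{-1}\,ds$ and its time analogue; the efficient device is the random-walk coupling of Lemma \ref{5thinequality} (couple two walks started $x_{1}-x_{2}$ apart, resp.\ compare one walk at times $t_{1},t_{2}$). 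Transferring these bounds to the interpolated fields and using that $\tilde U^{x}_{n}(0)=\xi(x)$ is bounded and nonrandom, Kolmogorov--Chentsov on $C([0,T]\times[-L,L];\mathbb{R})$ plus a diagonal argument in $L\nearrow\infty$ gives tightness of $\{\tilde U_{n}\}$ in $C(\mathbb{T}\times\mathbb{R};\mathbb{R})$, hence a subsequence $\tilde U_{n_{k}}\Rightarrow U$. By Fatou $U$ inherits the Step 1--2 bounds, and Kolmogorov--Chentsov then yields the asserted $L^{p}$-boundedness and Hölder exponents $\gamma_{s}\in(0,\tfrac12)$, $\gamma_{t}\in(0,\tfrac14)$.

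\emph{Step 3 (the test-function solution $Y$).} Multiplying \eqref{SDDE} by $\varphi(x)\delta_{n}$, $\varphi\in C^{\infty}_{c}(\mathbb{R})$, summing over $x\in\mathbb{X}_{n}$, and using discrete summation by parts, $\sum_{x}(\Delta_{n}\tilde U^{x}_{n})\varphi(x)\delta_{n}=\sum_{x}\tilde U^{x}_{n}(\Delta_{n}\varphi)(x)\delta_{n}$ with $\Delta_{n}\varphi\to\varphi''$ uniformly, gives a discrete test-function identity. Along the Skorokhod representation of the convergent subsequence, $\tilde U_{n_{k}}\to Y$ a.s.\ uniformly on compacts with $Y\overset{d}{=}U$; since $\varphi$ is compactly supported, the Riemann sums converge to the $L^{2}(\mathbb{R})$-pairings in \eqref{TFF}, continuity of $a,b$ together with the moment bounds supplying uniform integrability for the drift and Laplacian terms. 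The martingale term $M_{n_{k}}(t)=\epsilon_{2}\int_{0}^{t}\sum_{x}a(\tilde U^{x}_{n_{k}}(s))\varphi(x)\delta_{n_{k}}^{1/2}\,dW^{x}_{n_{k}}(s)$ is a continuous $L^{2}$-martingale with $\langle M_{n_{k}}\rangle_{t}=\epsilon_{2}^{2}\int_{0}^{t}\sum_{x}a^{2}(\tilde U^{x}_{n_{k}}(s))\varphi^{2}(x)\delta_{n_{k}}\,ds$; passing to the limit identifies, for $Y$, the process $N_{\varphi}(t):=(Y(t)-\xi,\varphi)-\tfrac{\epsilon_{1}}{2}\int_{0}^{t}(Y(s),\varphi'')\,ds-\int_{0}^{t}(b(Y(s)),\varphi)\,ds$ as a continuous martingale with $\langle N_{\varphi}\rangle_{t}=\epsilon_{2}^{2}\int_{0}^{t}\int_{\mathbb{R}}a^{2}(Y(s,x))\varphi^{2}(x)\,dx\,ds$ and the matching bilinear cross-variations. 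By the martingale-measure representation theorem (Walsh \cite{WA}), on a possible extension of the probability space there is a space-time white noise $\eus{W}$ with $N_{\varphi}(t)=\epsilon_{2}\int_{0}^{t}\int_{\mathbb{R}}a(Y(s,x))\varphi(x)\,\eus{W}(dx,ds)$ for every $\varphi$, which is precisely \eqref{TFF}.

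\emph{Step 4 (uniqueness in law).} When $a(u)=u^{\gamma}$ with $\tfrac12\le\gamma\le1$ and $b(u)=\sum_{i=1}^{N}c_{i}u^{\alpha_{i}}$ with $\gamma\le\alpha_{i}\le1$, these coefficients lie within the scope of our Allen--Cahn uniqueness result, whose duality argument transcribes essentially verbatim ($a(u)=u^{\gamma}$ being the diffusion treated there and $b$ an admissible polynomial drift), giving that any two continuous SDDEs weak limit solutions on $[0,T]\times[0,L]$ have the same finite-dimensional laws, hence the same law on path space. Uniqueness in law then promotes the subsequential convergence to $\tilde U_{n}\Rightarrow U$ along the whole sequence, since every subsequence has a further subsequence converging to the common limit law. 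I expect the main obstacle to be the end of Step 3 — the clean identification of the limiting martingale family with a genuine space-time white-noise stochastic integral on a suitably extended space — together with the $n$-uniform coupling estimates of Step 2; the newly added continuous drift, being of linear growth, is absorbed painlessly at the singular-Gronwall stage.
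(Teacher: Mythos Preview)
Your outline is correct and essentially coincides with the paper's proof: moment bounds from \eqref{GSDDEeq} via Burkholder--Jensen and the $Q$-estimates, Kolmogorov increment bounds for tightness, Skorokhod representation, identification of the limiting martingale problem (Theorem~\ref{1stmartprob}), and the martingale-measure representation (Theorem~\ref{marttff}) to recover \eqref{TFF}. Three small corrections for execution: (i) the paper removes the $(t-s)^{-1/2}$ singularity by iterating the moment inequality once (Corollary~\ref{befG}) and then applies ordinary Gronwall, rather than a singular Gronwall lemma; (ii) the temporal $Q$-increment needed for the drift term is handled by a CLT-type estimate (Lemmas~\ref{6thinequality}--\ref{7thinequality}), not by coupling, and yields the extra $(t-r)^{2q(1-e^{-1})}$ term in Lemma~\ref{boundsontdifferences}; (iii) the Allen--Cahn uniqueness in \cite{ACR1,ACR2} is a Girsanov change-of-measure argument---one only has to check $b(U)/a(U),\,b(V)/a(V)\in L^{2}(\eus{R}_{T,L},\lambda)$ a.s.---not a duality argument.
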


\begin{rem} \rm
In Theorem \ref{ww} we suppressed the dependence on the epsilons, since the results are given for fixed $\epsilon_1,\epsilon_2$; i.e.,
$U=U_{\epsilon_1,\epsilon_2}$ and $Y=Y_{\epsilon_1,\epsilon_2}$.
\label{fixedepsilon}
\end{rem}

The next result reveals that the behavior of $e^{\epsilon_1,\epsilon_2}_{\mbox{\tiny heat}} (a,b,\xi)$ as $\epsilon_1,\epsilon_2\searrow0$ is controlled by the ratio
${\epsilon_2}/{{ \epsilon_1}^{1/4}}$:
the solutions blow up in $L^2$ if ${\epsilon_2}/{{ \epsilon_1}^{1/4}}\nearrow\infty$ and they converge to the deterministic
$\epsilon_1\searrow0$ limit in $L^{2q}$ ($q\ge1$) if ${\epsilon_2}/{{ \epsilon_1}^{1/4}}\searrow0$.

\begin{thm} [Limits of $e^{\epsilon_1,\epsilon_2}_{\mbox{\tiny heat}} (a,b,\xi)$
as $\epsilon_1,\epsilon_2\searrow0$] \label{thm2}
$\mbox{i})$ Suppose that the conditions in \eqref{acond} hold, and that there are constants $K_l,K_L>0$ such that $K_l\le a(u)\le K_L$
for all $u\in\mathbb{R}$.  If
$\epsilon_1,\epsilon_2\searrow0$ (or $\epsilon_1,\epsilon_2\nearrow\infty$) such that the ratio
${\epsilon_2}/{{ \epsilon_1}^{1/4}}\to\infty;$
then,  $\sup_{0\le s\le T}\sup_{{x\in\mathbb{R}}}\mathbb{E} U_{\epsilon_1,\epsilon_2}^{2}(s,x)\nearrow\infty$, for any $T>0$ and for any
SDDEs weak limit solution $U_{\epsilon_1,\epsilon_2}$ to $e^{\epsilon_1,\epsilon_2}_{\mbox{\tiny heat}} (a,b,\xi)$
$\mbox{ii})$ If, in addition to the conditions in \eqref{acond}, $b$ is Lipschitz and $a$ is bounded; and if $U_{\epsilon_1}(t,x)$ is the solution to the
deterministic PDE obtained from $e^{\epsilon_1,\epsilon_2}_{\mbox{\tiny heat}} (a,b,\xi)$ by setting $a\equiv0$, and $U_{\epsilon_1,\epsilon_2}$ is a SDDEs weak limit solution to
$e^{\epsilon_1,\epsilon_2}_{\mbox{\tiny heat}} (a,b,\xi)$, then for every $q\ge1$
$$\sup_{ 0\le t\le T}\sup_{x\in\mathbb{R} }\mathbb{E}\left|U_{\epsilon_1,\epsilon_2}(t,x)-U_{\epsilon_1}(t,x)\right|^{2q}\longrightarrow0$$
as $\epsilon_1,\epsilon_2, \mbox{ and }\epsilon_2/\epsilon_1^{1/4}\to 0$.    Also, if $\left\{\tilde{U}^x_{n,\epsilon_1,\epsilon_2}(t)\right\}$ is a
solution to the SDDEs system $\left\{e_{\mbox{\tiny heat}}^{\mbox{\tiny SDDE}}(a,b,\xi,n)\right\}$ and if $\left\{\tilde{U}_{n,\epsilon_1}\right\}$ is a solution to the deterministic system
obtained from $e_{\mbox{\tiny heat}}^{\mbox{\tiny SDDE}}(a,b,\xi,n)$ by setting $a\equiv0$, then for every $q,n\ge1$
$$\sup_{ 0\le t\le T}\sup_{x\in\mathbb{R} }\mathbb{E}\left|\tilde{U}^x_{n,\epsilon_1,\epsilon_2}(t)-\tilde{U}_{n,\epsilon_1}(t,x)\right|^{2q}\longrightarrow0$$
as $\epsilon_1,\epsilon_2, \mbox{ and }\epsilon_2/\epsilon_1^{1/4}\to 0$.
\end{thm}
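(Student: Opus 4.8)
The plan is to reduce both parts to the mild (Green function) formulation. By Theorem~\ref{ww} there is a random field $Y=Y_{\epsilon_1,\epsilon_2}$ with the same law as $U_{\epsilon_1,\epsilon_2}$ satisfying \eqref{TFF}, hence---by the continuous-time analogue of the test-function/Green-function equivalence recalled after Lemma~\ref{GSDDE} (Walsh~\cite{WA})---the mild equation with heat kernel $G_{\epsilon_1}(r,z)=(2\pi\epsilon_1 r)^{-1/2}e^{-z^2/(2\epsilon_1 r)}$; since $U_{\epsilon_1}$ is deterministic, every expectation appearing in the statement is unchanged upon replacing $U_{\epsilon_1,\epsilon_2}$ by $Y$. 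The two elementary identities $\int_0^t\!\int_{\mathbb R}G_{\epsilon_1}(t-s,x-y)\,dy\,ds=t$ and $\int_0^t\!\int_{\mathbb R}G_{\epsilon_1}(t-s,x-y)^2\,dy\,ds=t^{1/2}(\pi\epsilon_1)^{-1/2}$ are what force the ratio $\epsilon_2/\epsilon_1^{1/4}$ to appear, since the resulting ``noise weight'' is $\epsilon_2^2\,t^{1/2}(\pi\epsilon_1)^{-1/2}=(t^{1/2}/\sqrt\pi)(\epsilon_2/\epsilon_1^{1/4})^2$.

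For (i) I would split $Y=D_0+D_b+N$ into the initial-datum, drift, and stochastic parts of the mild formulation and use the pointwise bound $(a+b+c)^2\ge\tfrac12 c^2-2a^2-2b^2$ to get $\mathbb E Y^2(t,x)\ge\tfrac12\mathbb E N^2(t,x)-2\mathbb E D_0^2(t,x)-2\mathbb E D_b^2(t,x)$. By It\^o's isometry for Walsh martingale measures and $a\ge K_l$, $\mathbb E N^2(t,x)=\epsilon_2^2\int_0^t\!\int G_{\epsilon_1}(t-s,x-y)^2\,\mathbb E\,a(Y(s,y))^2\,dy\,ds\ge K_l^2(t^{1/2}/\sqrt\pi)(\epsilon_2/\epsilon_1^{1/4})^2$. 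Since $|D_0|\le\|\xi\|_\infty$ and, by Cauchy--Schwarz against the mass-$t$ measure $G_{\epsilon_1}(t-s,x-y)\,dy\,ds$ and the bound $b^2(u)\le K(1+u^2)$, $\mathbb E D_b^2(t,x)\le KT^2\big(1+M_\epsilon(T)\big)$ with $M_\epsilon(T):=\sup_{t\le T,\,x}\mathbb E Y^2(t,x)$ (finite for fixed $\epsilon_1,\epsilon_2$ by the $L^p$ bounds in Theorem~\ref{ww}), taking $t=T$ and the supremum over $x$ gives $(1+2KT^2)M_\epsilon(T)\ge\tfrac12 K_l^2(T^{1/2}/\sqrt\pi)(\epsilon_2/\epsilon_1^{1/4})^2-2\|\xi\|_\infty^2-2KT^2$, whose right-hand side tends to $+\infty$; hence so does $M_\epsilon(T)$. (The same computation for the SDDEs produces only the bounded noise weight $\epsilon_2^2 T/\delta_n$, because $\sum_z(Q_{\delta_n,\epsilon_1}^{r;z})^2\le\max_z Q_{\delta_n,\epsilon_1}^{r;z}\le1$, which is why this blow-up is a feature of the continuous scale only.)

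For (ii) I would set $Z=Y_{\epsilon_1,\epsilon_2}-U_{\epsilon_1}$ and subtract the two mild equations, obtaining $Z=I_1+I_2$ with $I_1(t,x)=\int_0^t\!\int G_{\epsilon_1}(t-s,x-y)\big[b(Y(s,y))-b(U_{\epsilon_1}(s,y))\big]\,dy\,ds$ and $I_2(t,x)=\epsilon_2\int_0^t\!\int G_{\epsilon_1}(t-s,x-y)a(Y(s,y))\,\eus{W}(dy,ds)$. Jensen against the mass-$t$ measure together with $b$ Lipschitz (constant $L$) gives $\mathbb E|I_1(t,x)|^{2q}\le T^{2q-1}L^{2q}\int_0^t\phi(s)\,ds$, where $\phi(s):=\sup_x\mathbb E|Z(s,x)|^{2q}$; the Burkholder--Davis--Gundy inequality for the (fixed-$t$) space-time stochastic integral together with $a$ bounded gives $\mathbb E|I_2(t,x)|^{2q}\le C_q\|a\|_\infty^{2q}(T^{1/2}/\sqrt\pi)^q(\epsilon_2/\epsilon_1^{1/4})^{2q}=:\Theta$. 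Thus $\phi(t)\le C\int_0^t\phi(s)\,ds+C'\Theta$ with $\phi$ finite (by the $L^p$ bounds in Theorem~\ref{ww} and boundedness of the deterministic $U_{\epsilon_1}$), so Gronwall yields $\sup_{t\le T}\phi(t)\le C'e^{CT}\Theta\to0$ as $\epsilon_2/\epsilon_1^{1/4}\to0$. The SDDEs assertion is proved verbatim with $G_{\epsilon_1}(t-s,x-y)$ replaced by $Q_{\delta_n,\epsilon_1}^{t-s;x-y}$ and $\int_{\mathbb R}dy$ by $\sum_{y\in\mathbb X_n}$, using $\sum_y Q_{\delta_n,\epsilon_1}^{t-s;x-y}=1$ in the drift estimate and $\delta_n^{-1}\sum_y\int_0^t(Q_{\delta_n,\epsilon_1}^{t-s;x-y})^2\,ds\le t/\delta_n$ (the collision bound above) in the BDG estimate; this gives $\mathbb E|I_2^{(n)}(t,x)|^{2q}\le C_q\|a\|_\infty^{2q}(T/\delta_n)^q\epsilon_2^{2q}\to0$ as $\epsilon_2\to0$ for each fixed $n$, and Gronwall closes the argument.

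The step I expect to be the main obstacle is the drift bookkeeping in (i): because $b$ has only linear growth, $\mathbb E D_b^2$ is not uniformly bounded as $\epsilon_1,\epsilon_2\to0$, so one must show it cannot absorb the blow-up of $\mathbb E N^2$. This is precisely why I keep $\mathbb E D_b^2$ on the same side as $M_\epsilon(T)$ and solve the resulting self-referential inequality with the harmless coefficient $1+2KT^2$, rather than attempting a naive additive estimate. A secondary technical point is invoking the correct It\^o isometry / BDG inequality for Walsh's martingale-measure integrals and computing $\int\!\int G_{\epsilon_1}^2$ (and its lattice analogue $\delta_n^{-1}\sum\!\int Q_{\delta_n,\epsilon_1}^2$) sharply enough to expose the exact power $\epsilon_2/\epsilon_1^{1/4}$.
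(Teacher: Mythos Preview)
Your proposal is correct, and for part~(ii) it is essentially the paper's own argument (mild formulation, Burkholder plus Jensen, Lipschitz $b$ and bounded $a$, then Gronwall). For part~(i) you take a genuinely different and cleaner route. The paper argues by contradiction: it expands $\mathbb{E}Y^2$ into the noise-squared term, the drift-squared term, and a cross term, then uses Cauchy--Schwarz together with the \emph{upper} bound $a\le K_L$ to show the cross term is $O(\epsilon_2/\epsilon_1^{1/4})$, which is dominated by the noise contribution of order $(\epsilon_2/\epsilon_1^{1/4})^2$. Your pointwise inequality $(D_0+D_b+N)^2\ge\tfrac12 N^2-2D_0^2-2D_b^2$ sidesteps the cross term entirely, and feeding $\mathbb{E}D_b^2\le KT^2(1+M_\epsilon(T))$ back into a self-referential inequality for $M_\epsilon(T)$ is exactly the right way to handle the linearly growing drift; as a bonus, your argument never uses $a\le K_L$, only $a\ge K_l$. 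For the SDDE half of~(ii) the paper invokes Lemma~\ref{2ndQinequality} to obtain the same $(\epsilon_2/\epsilon_1^{1/4})^{2q}$ noise weight as in the continuous case, uniformly in $n$, whereas your cruder collision bound gives only $(\epsilon_2^2 T/\delta_n)^q$; since the statement is for fixed $n$ and $\epsilon_2\to0$ is part of the hypothesis, your version suffices, though the paper's estimate is sharper. Your side remark that the blow-up in~(i) is a purely continuous-scale phenomenon agrees with the paper's Remark~\ref{SDDEno}.
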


\begin{rem} \rm
Taking note of Remark \ref{fixedepsilon}, it follows from well known facts in Walsh \cite{WA} that, under \eqref{acond}, \eqref{TFF} is equivalent to the Green function formulation \eqref{GFF}
below
\begin{equation}
\begin{split}
&Y_{\epsilon_1,\epsilon_2}(t,x)-\int_{\mathbb{R}}G_{\epsilon_1}(t;x,y)\xi(y)dy\\&=\int_{\mathbb{R}}\int_0^t G_{\epsilon_1}(s,t;x,y)\Big\lbrack{\epsilon_2
a(Y_{\epsilon_1,\epsilon_2}(s,y))} \eus{W}(ds,dy)+ b(Y_{\epsilon_1,\epsilon_2}(s,y))dsdy\Big\rbrack,
\end{split}
\label{GFF}
\end{equation}
where
$$
G_{\epsilon_1}(t;x,y)=\frac{1}{\sqrt{2\epsilon_1\pi t}}e^{-(x-y)^2/2\epsilon_1t}.
$$
Also, note that when $\epsilon_1,\epsilon_2$ are fixed,
$\sup_{0\le s\le T}\sup_{{x\in\mathbb{R}}}\mathbb{E} U_{\epsilon_1,\epsilon_2}^{2}(s,x)<\infty $ for all $T>0$
(see Proposition \ref{Expbd},  \eqref{Lp}, and note that $Y$ has the same law as $U$).
\label{rm2}
\end{rem}

\section{Existence, uniqueness, and regularity}

The proof of Theorem \ref{ww} proceeds in several steps as in the heat SPDE
in \cite{ASDDE1}, with the extra difficulty
caused by the extra term $b(U)$: we first get
Kolmogorov type estimates on the spatial and temporal differences of the continuous
$\tilde{U}^{x}_n(t)$'s establishing tightness, and so by Lemma \ref{GSDDE} and Definition \ref{limitsolns} this implies the existence of a weak SDDE weak limit solution to $e^{\epsilon_1,\epsilon_2}_{\mbox{\tiny heat}} (a,b,\xi)$.
Then we show that the limit satisfies the test function formulation of $e^{\epsilon_1,\epsilon_2}_{\mbox{\tiny heat}} (a,b,\xi)$.   Since Theorem \ref{ww} is stated for fixed $\epsilon_1,\epsilon_2$, we suppress
the dependence on these parameters (except in Lemma \ref{2ndQinequality} below which we use in Remark \ref{SDDEno} in the third section), and we assume without
loss of generality that they are both $1$.
Throughout the article, $K$ will denote a constant that may change its value
from one step to the next.

\subsection{Random walk estimates}

The first set of estimates we need are bounds on the random walk density $Q^{t;x}_{\delta_n}$.  Since all the results in this section hold for all
$n$, we will suppress the dependence on $n$, except in \eqref{determ}, to simplify the notation.    The first three lemmas are taken directly from \cite{ASDDE1} p.~32 and are
reproduced below for convenience:

\begin{lem}
There is a constant $K$ such that
\[\sum_{x\in\mathbb{X}} {(Q^{t:x}_\delta)}^2 \leq K\delta/\sqrt t\quad\mbox{ and }\quad\sum_{x\in\mathbb{X}} {(Q_{\delta,\epsilon_1}^{t;x})}^2 \leq K\delta/\sqrt{\epsilon_1 t}, \]
and hence
\[\int_0^t\sum_{x\in\mathbb{X}} {(Q^{s;x}_\delta)}^2 \, ds
\leq K\delta\sqrt t \quad\mbox{ and }\quad\int_0^t\sum_{x\in\mathbb{X}} {(Q_{\delta,\epsilon_1}^{s;x})}^2 \, ds
\leq K\delta\sqrt{\frac{t}{\epsilon_1}}.\]
\label{2ndQinequality}
\end{lem}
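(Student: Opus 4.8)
The plan is to use the probabilistic description of $Q^{t;x}_\delta$ recorded in Remark~\ref{Qasdensity}: it is the time-$t$ transition function of the continuous-time, symmetric, nearest-neighbor random walk on $\mathbb{X}$ with exponential holding times of mean $\delta^2/\epsilon_1$ (jump rate $\lambda=\epsilon_1/\delta^2$), which at each jump moves to either neighbor with probability $1/2$. Thus $Q^{t;\cdot}_\delta$ is a probability mass function, symmetric in the sense $Q^{t;-x}_\delta=Q^{t;x}_\delta$, and it satisfies Chapman--Kolmogorov. The decisive first step is the reduction
\[
\sum_{x\in\mathbb{X}}\big(Q^{t;x}_\delta\big)^2
=\sum_{x\in\mathbb{X}}Q^{t;x}_\delta\,Q^{t;-x}_\delta
=Q^{2t;0}_\delta ,
\]
which turns both of the claimed $\ell^2$-bounds into the single on-diagonal (return-probability) estimate $Q^{s;0}_\delta\le K\delta/\sqrt{\epsilon_1 s}$, $s>0$.

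To get this return estimate I would condition on the number $N_s$ of jumps by time $s$, a Poisson variable with mean $m:=\epsilon_1 s/\delta^2$: with $p_k=\mathbb{P}(S_k=0)$ the position probability of the simple symmetric walk on $\mathbb{Z}$ after $k$ steps, $Q^{s;0}_\delta=\sum_{k\ge0}e^{-m}\tfrac{m^k}{k!}p_k$, where $p_0=1$, $p_{2j+1}=0$, and $p_{2j}=\binom{2j}{j}2^{-2j}\le(\pi j)^{-1/2}$ by Stirling. Hence $Q^{s;0}_\delta\le1$ always, and for $m\ge1$ the series is dominated by $e^{-m}+C\sum_{j\ge1}e^{-m}\tfrac{m^{2j}}{(2j)!}j^{-1/2}=O(m^{-1/2})$ (equivalently, $Q^{2t;0}_\delta=e^{-m}I_0(m)$ for the modified Bessel function $I_0$, and $\sqrt m\,e^{-m}I_0(m)$ is bounded on $(0,\infty)$ by the asymptotics of $I_0$). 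Combining the two regimes gives $Q^{s;0}_\delta\le C_0\,m^{-1/2}=C_0\,\delta/\sqrt{\epsilon_1 s}$ for a universal $C_0$, which is the first inequality; taking $\epsilon_1=1$ yields the unparametrized version.

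A cleaner variant avoiding the combinatorics is to pass to the lattice Fourier transform. From $\Delta_n e^{i\theta x}=-2\delta^{-2}(1-\cos\theta\delta)\,e^{i\theta x}$ and the forward equation for $Q^{t;x}_\delta$ (with generator $\tfrac{\epsilon_1}{2}\Delta_n$) one gets $\widehat{Q}_\delta(t,\theta):=\sum_{x\in\mathbb{X}}Q^{t;x}_\delta e^{i\theta x}=\exp\!\big(-\epsilon_1\delta^{-2}t(1-\cos\theta\delta)\big)$, whence by Plancherel on $\mathbb{X}=\delta\mathbb{Z}$ and the substitution $u=\theta\delta$,
\[
\sum_{x\in\mathbb{X}}\big(Q^{t;x}_\delta\big)^2
=\frac{\delta}{2\pi}\int_{-\pi/\delta}^{\pi/\delta}\widehat{Q}_\delta(t,\theta)^2\,d\theta
=\frac{1}{2\pi}\int_{-\pi}^{\pi}e^{-2\epsilon_1\delta^{-2}t(1-\cos u)}\,du .
\]
The elementary bound $1-\cos u\ge 2u^2/\pi^2$ on $[-\pi,\pi]$ then dominates this by the Gaussian integral $\frac1{2\pi}\int_{\mathbb{R}}e^{-4\epsilon_1 t u^2/(\pi^2\delta^2)}\,du=\frac{\sqrt\pi}{4}\,\delta/\sqrt{\epsilon_1 t}$, which is the first inequality with an explicit constant. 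Finally, the two integral bounds follow at once by integrating $K\delta/\sqrt{\epsilon_1 s}$ over $s\in[0,t]$, giving $2K\delta\sqrt{t/\epsilon_1}$ (and $2K\delta\sqrt t$ when $\epsilon_1=1$). I do not anticipate a real obstacle: the only care needed is the $\delta$-scaling and Plancherel normalization on the lattice in the Fourier route and, in the probabilistic route, the small-$m$ regime, where the local-CLT bound degenerates but is harmless since there $Q^{s;0}_\delta\le1\le\delta/\sqrt{\epsilon_1 s}$.
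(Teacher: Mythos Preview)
Your argument is correct. The paper does not actually supply a proof of this lemma: it is quoted verbatim from \cite{ASDDE1} (see the sentence immediately preceding the three random-walk lemmas), so there is no in-paper argument to compare against. Both of your routes are standard and sound. The Chapman--Kolmogorov reduction $\sum_x (Q^{t;x}_\delta)^2=Q^{2t;0}_\delta$ together with the Bessel identity $Q^{s;0}_\delta=e^{-m}I_0(m)$, $m=\epsilon_1 s/\delta^2$, and the well-known boundedness of $\sqrt{m}\,e^{-m}I_0(m)$ gives the result directly; your handling of the small-$m$ regime via $Q^{s;0}_\delta\le 1\le m^{-1/2}$ is exactly what is needed. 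The lattice Fourier/Plancherel computation is perhaps the cleaner of the two, since it bypasses the case split and yields an explicit constant; your normalization on $\delta\mathbb{Z}$ and the inequality $1-\cos u\ge 2u^2/\pi^2$ on $[-\pi,\pi]$ are both correct. One purely notational quibble: in the parenthetical you write $Q^{2t;0}_\delta=e^{-m}I_0(m)$ with $m$ already defined as $\epsilon_1 s/\delta^2$; this should read $Q^{s;0}_\delta$ (or else take $m=2\epsilon_1 t/\delta^2$), but the mathematics is unaffected.
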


\begin{lem}
There is a constant $K$ such that
\[\int_0^t\sum_{x\in\mathbb{X}} {(Q^{s;x}_\delta - Q^{s;x+z}_\delta)}^2 ds \leq K\delta |z|.\]
\label{3rdQinequality}
\end{lem}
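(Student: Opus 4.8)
The plan is to turn the space–time sum into a one–dimensional Fourier integral on the dual torus of the lattice. Set $\epsilon_1=1$ as agreed, and note it suffices to treat $z\in\mathbb{X}$ (otherwise $x+z\notin\mathbb{X}$) and $z\neq 0$. Since $Q^{s;\cdot}_\delta$ is the transition kernel of the continuous–time walk with generator $\tfrac12\Delta_n$, its discrete Fourier transform is the pure exponential $e^{s\lambda_\delta(\theta)}$ with symbol $\lambda_\delta(\theta)=\delta^{-2}(\cos\delta\theta-1)\le 0$, so that $Q^{s;x}_\delta=\tfrac{\delta}{2\pi}\int_{-\pi/\delta}^{\pi/\delta}e^{\mathrm{i}x\theta}e^{s\lambda_\delta(\theta)}\,d\theta$. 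Since the Fourier transform of $Q^{s;\cdot+z}_\delta$ is $e^{\mathrm{i}z\theta}e^{s\lambda_\delta(\theta)}$, Parseval's identity on $\delta\mathbb{Z}$ gives
\[
\sum_{x\in\mathbb{X}}\bigl(Q^{s;x}_\delta-Q^{s;x+z}_\delta\bigr)^2
=\frac{\delta}{2\pi}\int_{-\pi/\delta}^{\pi/\delta}\bigl|1-e^{\mathrm{i}z\theta}\bigr|^2 e^{2s\lambda_\delta(\theta)}\,d\theta .
\]

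First I would integrate in $s$: because $\lambda_\delta(\theta)\le 0$, one has $\int_0^t e^{2s\lambda_\delta(\theta)}\,ds\le 1/\bigl(2|\lambda_\delta(\theta)|\bigr)$ \emph{uniformly in $t$} — this is exactly where the $t$–uniformity of the bound comes from. Next I would use the elementary trigonometric estimates $|1-e^{\mathrm{i}z\theta}|^2=2(1-\cos z\theta)\le\min\{4,z^2\theta^2\}$ and, via Jordan's inequality, $|\lambda_\delta(\theta)|=\delta^{-2}(1-\cos\delta\theta)\ge \tfrac{2}{\pi^2}\theta^2$ on $|\theta|\le\pi/\delta$. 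Together these yield
\[
\int_0^t\sum_{x\in\mathbb{X}}\bigl(Q^{s;x}_\delta-Q^{s;x+z}_\delta\bigr)^2\,ds
\le K\delta\int_{\mathbb{R}}\frac{\min\{4,z^2\theta^2\}}{\theta^2}\,d\theta ,
\]
and the substitution $\theta\mapsto\theta/|z|$ shows the last integral equals $|z|\int_{\mathbb{R}}\min\{4\theta^{-2},1\}\,d\theta=8|z|$, which gives the claim.

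An alternative, fully probabilistic route — which I would include at least as a remark, since it produces the sharp constant and parallels the identity $\sum_x(Q^{s;x}_\delta)^2=Q^{2s;0}_\delta$ underlying Lemma \ref{2ndQinequality} — starts from the observation that, by symmetry of $Q_\delta$ and the Chapman–Kolmogorov relation, $\sum_x(Q^{s;x}_\delta-Q^{s;x+z}_\delta)^2=2\bigl(Q^{2s;0}_\delta-Q^{2s;z}_\delta\bigr)$; substituting $r=2s$ reduces the lemma to $\int_0^{2t}\bigl(Q^{r;0}_\delta-Q^{r;z}_\delta\bigr)\,dr\le\delta|z|$, and in fact $\int_0^{\infty}\bigl(Q^{r;0}_\delta-Q^{r;z}_\delta\bigr)\,dr=\delta|z|$ exactly. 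This identity follows by writing the continuous–time walk as $\delta S_{N(r/\delta^2)}$ with $S$ a simple random walk on $\mathbb{Z}$ and $N$ an independent rate–one Poisson process, integrating term by term (each Poisson weight integrates to $\delta^2$), and invoking the classical potential–kernel formula $\sum_{m\ge 0}\bigl(\mathbb{P}(S_m=0)-\mathbb{P}(S_m=z/\delta)\bigr)=|z/\delta|$. The only genuinely delicate point in either approach is bookkeeping the single power of $\delta$ — in the Fourier route the $\delta$ from Parseval against the $\delta$–free lower bound $|\lambda_\delta(\theta)|\ge\tfrac{2}{\pi^2}\theta^2$, and in the probabilistic route the justification of the term–by–term integration — neither of which is more than routine, so I expect no real obstacle beyond keeping the constants straight.
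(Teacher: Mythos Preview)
Your Fourier argument is correct and the bookkeeping is right: Parseval on $\delta\mathbb{Z}$ produces exactly one factor of $\delta$, Jordan's inequality gives the $\delta$-free lower bound $|\lambda_\delta(\theta)|\ge\tfrac{2}{\pi^2}\theta^2$ on $|\theta|\le\pi/\delta$, and the scaling substitution yields $K\delta|z|$ with $K$ independent of $t,\delta,z$. Your probabilistic alternative is also correct; the identity $\sum_x(Q^{s;x}_\delta-Q^{s;x+z}_\delta)^2=2(Q^{2s;0}_\delta-Q^{2s;z}_\delta)$ and the potential-kernel evaluation are both fine. The one step you should make explicit is that passing from $\int_0^{2t}$ to $\int_0^\infty$ requires $Q^{r;0}_\delta\ge Q^{r;z}_\delta$ for every $r>0$; this holds because the kernel is $e^{-r/\delta^2}I_{|x|/\delta}(r/\delta^2)$ and the modified Bessel functions satisfy $I_0>I_1>I_2>\cdots$ on $(0,\infty)$ (equivalently, the Poissonized binomial is unimodal at the origin).

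As for comparison with the paper: there is nothing to compare against. The paper does not prove this lemma but simply quotes it from~\cite{ASDDE1} (``The first three lemmas are taken directly from~\cite{ASDDE1} p.~32 and are reproduced below for convenience''). Either of your arguments would serve as a self-contained replacement for that citation; the probabilistic one is closer in spirit to the coupling and random-walk techniques the paper uses for the neighbouring Lemmas~\ref{5thinequality} and~\ref{6thinequality}, and has the bonus of identifying the sharp constant $K=1$.
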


\begin{lem}
There is a constant $K$ such that
\[\int_0^t\sum_{x\in\mathbb{X}} {(Q^{t-s;x}_\delta - Q^{r-s;x}_\delta)}^2 ds \leq K\delta\sqrt
 {t-r},\]
for $r<t$, and with the convention that $Q_{\delta}^x(t)=0$ if $t<0$.
\label{4thQinequality}
\end{lem}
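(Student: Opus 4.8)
The plan is to prove this time-increment bound by Fourier transform on the lattice $\mathbb{X}=\mathbb{X}_n$, mirroring the standard way of estimating time increments of the continuous Gaussian heat kernel; the only point requiring care is that every constant must come out independent of $n$ (equivalently, of $\delta=\delta_n$). First I record the Fourier description of $Q^{t;x}_\delta$: for $f:\mathbb{X}\to\mathbb{C}$ put $\hat f(\theta)=\sum_{x\in\mathbb{X}}f(x)e^{-ix\theta}$, $\theta\in[-\pi/\delta,\pi/\delta]$, so that Parseval reads $\sum_{x\in\mathbb{X}}f(x)^2=\frac{\delta}{2\pi}\int_{-\pi/\delta}^{\pi/\delta}|\hat f(\theta)|^2\,d\theta$. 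Since by Remark \ref{Qasdensity} $Q^{t;\cdot}_\delta$ is the law at time $t$ of the continuous-time random walk with generator $\frac12\Delta_\delta$ started at $0$, and $\frac12\Delta_\delta$ acts in the Fourier variable as multiplication by $-\mu_\delta(\theta)$ with $\mu_\delta(\theta)=\frac{2}{\delta^2}\sin^2(\delta\theta/2)\ge0$, one has $\widehat{Q^{t;\cdot}_\delta}(\theta)=e^{-t\mu_\delta(\theta)}$. The single fact needed about the symbol is the $\delta$-free comparison $c_1\theta^2\le\mu_\delta(\theta)\le c_2\theta^2$ for $|\theta|\le\pi/\delta$, with $c_1=2/\pi^2$ and $c_2=1/2$, which follows from $\frac{2}{\pi}|\phi|\le|\sin\phi|\le|\phi|$ on $[-\pi/2,\pi/2]$ taken at $\phi=\delta\theta/2$.

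Next I split $\int_0^t=\int_0^r+\int_r^t$. On $[r,t]$ the convention $Q^{r-s;x}_\delta=0$ turns the integrand into $\sum_x(Q^{t-s;x}_\delta)^2$, and the substitution $u=t-s$ with Lemma \ref{2ndQinequality} gives $\int_r^t\sum_x(Q^{t-s;x}_\delta)^2\,ds=\int_0^{t-r}\sum_x(Q^{u;x}_\delta)^2\,du\le K\delta\sqrt{t-r}$. On $[0,r]$ both $t-s$ and $r-s$ are nonnegative, so by Chapman--Kolmogorov in Fourier form $\widehat{Q^{t-s;\cdot}_\delta-Q^{r-s;\cdot}_\delta}(\theta)=e^{-(r-s)\mu_\delta(\theta)}\big(e^{-(t-r)\mu_\delta(\theta)}-1\big)$, and Parseval together with Fubini (the integrand is nonnegative) yield
\[
\int_0^r\sum_x\big(Q^{t-s;x}_\delta-Q^{r-s;x}_\delta\big)^2\,ds=\frac{\delta}{2\pi}\int_{-\pi/\delta}^{\pi/\delta}\Big(\int_0^re^{-2(r-s)\mu_\delta(\theta)}\,ds\Big)\big(1-e^{-(t-r)\mu_\delta(\theta)}\big)^2\,d\theta.
\]
Since $\int_0^re^{-2(r-s)\mu_\delta(\theta)}\,ds\le 1/(2\mu_\delta(\theta))$, the right side is at most $\frac{\delta}{4\pi}\int_{-\pi/\delta}^{\pi/\delta}\mu_\delta(\theta)^{-1}\big(1-e^{-(t-r)\mu_\delta(\theta)}\big)^2\,d\theta$, and it remains to bound this last integral by $C\sqrt{t-r}$ with $C$ independent of $\delta$.

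That bound is the only delicate step: using $1-e^{-(t-r)\mu_\delta(\theta)}\le 1$ everywhere leaves a non-integrable $1/|\theta|$ near $\theta=0$, so one must split the frequency domain at $(t-r)\mu_\delta(\theta)=1$. On $\{(t-r)\mu_\delta(\theta)\le1\}$ use $(1-e^{-y})^2\le y^2$: the integrand is at most $(t-r)^2\mu_\delta(\theta)\le c_2(t-r)^2\theta^2$ on a set contained in $\{|\theta|\le(c_1(t-r))^{-1/2}\}$ (since $c_1\theta^2\le\mu_\delta(\theta)\le1/(t-r)$ there), which integrates to at most a constant times $(t-r)^2(t-r)^{-3/2}=(t-r)^{1/2}$. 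On $\{(t-r)\mu_\delta(\theta)>1\}$ use $(1-e^{-y})^2\le1$: the integrand is at most $\mu_\delta(\theta)^{-1}\le(c_1\theta^2)^{-1}$ on a set contained in $\{|\theta|>(c_2(t-r))^{-1/2}\}$ (since $c_2\theta^2\ge\mu_\delta(\theta)>1/(t-r)$ there), which integrates to at most a constant times $(t-r)^{1/2}$. With $c_1,c_2$ as above both constants are independent of $\delta$, hence of $n$. Adding the two pieces of the split at $s=r$ gives $\int_0^t\sum_x(Q^{t-s;x}_\delta-Q^{r-s;x}_\delta)^2\,ds\le K\delta\sqrt{t-r}$, as claimed.

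The main obstacle is exactly this low-frequency estimate --- one has to exploit the genuine quadratic behavior $\mu_\delta(\theta)\ge c_1\theta^2$ near the origin, with $c_1$ not depending on $\delta$, rather than only the vanishing of $1-e^{-(t-r)\mu_\delta(\theta)}$; everything else is the split at $s=r$, Parseval, and the appeal to Lemma \ref{2ndQinequality}. (One can also argue probabilistically from $\sum_xQ^{\tau;x}_\delta Q^{\tau';x}_\delta=\mathbb{P}(X_\tau=X'_\tau)$ for independent copies $X,X'$ of the walk, reducing matters to the occupation density of the difference walk, but the Fourier computation is the cleanest way to keep all constants uniform in $n$.)
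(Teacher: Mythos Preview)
Your argument is correct. The paper does not supply its own proof of this lemma; it simply quotes the statement from \cite{ASDDE1} (see the sentence preceding Lemma~\ref{2ndQinequality}), so there is nothing in the present article to compare against. Your Fourier/Parseval computation is the standard route for such time-increment bounds on discrete heat kernels: the split at $s=r$ reduces the $[r,t]$ piece to Lemma~\ref{2ndQinequality}, and on $[0,r]$ the identity $\widehat{Q^{\tau;\cdot}_\delta}(\theta)=e^{-\tau\mu_\delta(\theta)}$ with $\mu_\delta(\theta)=\tfrac{2}{\delta^2}\sin^2(\delta\theta/2)$, together with the $\delta$-independent comparison $\tfrac{2}{\pi^2}\theta^2\le\mu_\delta(\theta)\le\tfrac12\theta^2$ on $|\theta|\le\pi/\delta$, gives exactly the $\sqrt{t-r}$ rate after splitting the frequency integral at $(t-r)\mu_\delta(\theta)=1$. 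All constants are manifestly independent of $\delta$, which is the crucial point for the later tightness argument. The only cosmetic remark is that at $\theta=0$ the expression $\mu_\delta(\theta)^{-1}(1-e^{-(t-r)\mu_\delta(\theta)})^2$ is formally $0/0$, but since it extends continuously to $0$ there is no integrability issue; you might note this in one line.
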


The next three Lemmas are needed to deal with the reaction term $b(U)$ in the SPDE.

\begin{lem}
There is a constant $K$ such that
\begin{equation}
\sum_{x\in\mathbb{X}} {|Q^{t;x}_\delta - Q^{t:x+z}_\delta|} \leq 1 \wedge K \frac{|z|}{\sqrt{t}},
\label{ineq1}
\end{equation}
and thus
\begin{equation}
\int_0^t\sum_{x\in\mathbb{X}} \left|{Q^{s;x}_\delta - Q^{s;x+z}_\delta}\right| ds \leq K \sqrt{t} |z|.
\label{ineq2}
\end{equation}
\label{5thinequality}
 \end{lem}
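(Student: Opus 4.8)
The plan is to prove the pointwise bound \eqref{ineq1} and then obtain \eqref{ineq2} by integrating in time. For \eqref{ineq1}, first note that the trivial bound $\sum_{x\in\mathbb{X}}|Q^{t;x}_\delta - Q^{t;x+z}_\delta|\le \sum_{x\in\mathbb{X}}Q^{t;x}_\delta+\sum_{x\in\mathbb{X}}Q^{t;x+z}_\delta=2$ is too weak by a factor of $2$; but using that both $Q^{t;\cdot}_\delta$ and its shift are probability mass functions on $\mathbb{X}$, the total-variation distance between them is at most $1$, which gives the ``$1\wedge$'' part. (Alternatively, write $|p-q|=(p-q)^++(q-p)^+$ and use $\sum(p-q)^+=\sum(q-p)^+$ together with $\sum p=\sum q=1$ to get $\sum|p-q|=2\sum(p-q)^+\le 2\cdot\frac12\cdot 2$... — the clean statement is just $\mathrm{TV}\le 1$.)

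For the quantitative part $K|z|/\sqrt t$, I would use the probabilistic interpretation from Remark \ref{Qasdensity}: $Q^{t;\cdot}_\delta$ is the law of a continuous-time symmetric random walk $S_t$ on $\mathbb{X}$ started at $0$, and $Q^{t;\cdot+z}_\delta$ is its law started at $-z$ (equivalently the law of $S_t+z$ shifted). It suffices to treat $z=k\delta_n$ a lattice multiple (the interpolated/lattice structure and monotonicity in $|z|$ handle the rest), and by the triangle inequality in total variation it suffices to take $z=\delta_n$, i.e.\ bound $\sum_x |Q^{t;x}_\delta - Q^{t;x+\delta_n}_\delta|$ and then multiply by $k=|z|/\delta_n$. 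Here the coupling referred to in the paper's own roadmap enters: couple the walk started at $0$ with the walk started at $\delta_n$ by a standard reflection/mirror coupling on $\mathbb{X}$, so the two coalesce at the first time the difference walk hits $0$; then $\sum_x|Q^{t;x}_\delta-Q^{t;x+\delta_n}_\delta|\le 2\mathbb{P}(\tau_0>t)$ where $\tau_0$ is the coalescence time. A direct estimate on the continuous-time walk (the number of jumps by time $t$ is Poisson with mean $\epsilon_1 t/\delta_n^2$, here $\epsilon_1=1$, and the embedded discrete walk needs of order $(|z|/\delta_n)^2$ steps to travel distance $|z|/\delta_n$) gives $\mathbb{P}(\tau_0>t)\le K/\sqrt{t}\cdot\delta_n$ for the unit step, hence $\sum_x|Q^{t;x}_\delta-Q^{t;x+z}_\delta|\le K|z|/\sqrt t$ after summing the $k$ steps. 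Combining with the ``$\le 1$'' bound yields \eqref{ineq1}. Then \eqref{ineq2} follows immediately: $\int_0^t \sum_x|Q^{s;x}_\delta-Q^{s;x+z}_\delta|\,ds\le \int_0^t (1\wedge K|z|/\sqrt s)\,ds\le \int_0^t K|z|/\sqrt s\,ds=2K|z|\sqrt t$, i.e.\ the claimed $K\sqrt t\,|z|$.

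The main obstacle is making the coupling/step-counting bound $\mathbb{P}(\tau_0>t)\le K\delta_n/\sqrt t$ uniform in $n$ and valid for the full range of $t\in\mathbb{T}$, including small $t$ (where the bound is vacuous because of the ``$1\wedge$'' truncation, so the real content is $t\gtrsim \delta_n^2$) — equivalently, controlling the local central limit behavior of the continuous-time random walk density uniformly over lattices $\mathbb{X}_n$. An alternative to the coupling is a direct analytic estimate: $Q^{t;x}_\delta - Q^{t;x+\delta_n}_\delta = -\delta_n (\nabla_n Q^{t;\cdot}_\delta)(x)$ is a discrete gradient, and one can bound $\sum_x|\nabla_n Q^{t;x}_\delta|$ by $K/(\delta_n\sqrt t)$ using the explicit Fourier/Bessel-function representation of $Q$ or the known Gaussian-type estimates on $Q$ and its discrete derivatives already implicit in Lemmas \ref{2ndQinequality}--\ref{4thQinequality}; telescoping over $k=|z|/\delta_n$ steps then gives the $K|z|/\sqrt t$ bound directly. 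Either route reduces \eqref{ineq1} to a one-step estimate on the random walk density that is essentially classical, and \eqref{ineq2} is then just the elementary time integral above.
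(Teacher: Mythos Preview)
Your proposal is correct and matches the paper's approach: both interpret the sum as a total-variation distance between the laws of the walk started at $0$ and at $z$, bound it by the probability that a coupling has not yet coalesced (giving $1\wedge K|z|/\sqrt t$), and then integrate in time. The paper invokes the \emph{maximal} coupling directly for the pair at separation $|z|$ rather than telescoping to unit steps; one small caveat on your version is that a literal reflection coupling of two nearest-neighbour walks started one lattice step apart never coalesces (parity obstruction), so for the single-step estimate use independent walks run until they meet, or simply the maximal coupling as the paper does.
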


\begin{proof}
We use a standard maximal coupling argument.  For details on coupling and related
techniques the interested reader could consult \cite{HT} and the references therein.

Denote by $\Lambda_\delta^{x}(t)$
the law of the random walk, starting at $x$, after time $t$; and let $\| \cdot\|$ denote the total variation norm.  Then we are trying to bound
  $$A\overset{\triangle}{=}   \int_0^t \left\|\Lambda_\delta^{x}(s) - \Lambda_\delta^{x+z}(s)\right\|ds$$
So, using a coupling argument, we start two copies of the random walk at $0$ and $z$ and run the
maximal coupling.  The total variation at time $t$ between the two random walks laws
$\|\Lambda_\delta^{0}(t)-\Lambda_\delta^{z}(t)\|$ is exactly the probability of not yet coupling,
which is readily seen to be at most $1 \wedge K |z|/t^{1/2}$, for some constant $K$.   Thus, we get the first inequality
\eqref{ineq1}, and the second inequality \eqref{ineq2} (the bound on $A$) immediately follows.
\end{proof}

\begin{lem}
There are constants $K$ and $\delta^*$ such that
$$\sum_{x\in\mathbb{X}} \left|{Q^{t;x}_\delta - Q^{s;x}_\delta}\right| \leq
 1 \wedge K \frac{\sqrt{t}-\sqrt{s}}   {\sqrt{t}},
 $$
whenever $s<t$ and $\delta<\delta^*$, and hence
$$
\int_0^t\sum_{x\in\mathbb{X}} \left|{Q^{t-s;x}_\delta - Q^{r-s;x}_\delta}\right|ds
\leq K\big(1 + \log\big[\frac{t}{t-r}\big]\big){(t-r)},
$$
  for $r<t$ and $\delta<\delta^*$, and with the convention that $Q_{\delta}^{t;x}=0$ if $t<0$.
\label{6thinequality}
\end{lem}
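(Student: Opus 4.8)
The plan is to reduce the first displayed inequality to a cleaner equivalent form, prove that via the Kolmogorov forward equation of the underlying continuous‑time random walk together with Lemma~\ref{5thinequality}, and then deduce the integrated inequality by a change of variables and an elementary integration. For the reduction, writing $r=\sqrt{s/t}\in[0,1]$ one has $\tfrac12(t-s)/t=\tfrac12(1-r)(1+r)\le 1-r=(\sqrt t-\sqrt s)/\sqrt t\le(1-r)(1+r)=(t-s)/t$, so $1\wedge K(\sqrt t-\sqrt s)/\sqrt t$ and $1\wedge K(t-s)/t$ differ only through the constant; and since each $Q^{u;\cdot}_\delta$ is a probability vector, $\sum_x|Q^{t;x}_\delta-Q^{s;x}_\delta|\le 2$ unconditionally. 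Thus it suffices to show
\[
\sum_{x\in\mathbb X}\bigl|Q^{t;x}_\delta-Q^{s;x}_\delta\bigr|\le K\,\frac{t-s}{t},\qquad 0\le s\le t .
\]

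Recall (Remark~\ref{Qasdensity}, with $\epsilon_1=1$ throughout this section) that $Q^{u;\cdot}_\delta$ is the transition function of the symmetric random walk on $\mathbb X=\delta\mathbb Z$ with bounded jump rate $\delta^{-2}$, generated by $\tfrac12\Delta_\delta$. Its rates being bounded, $\Delta_\delta$ is a bounded operator on $\ell^1(\mathbb X)$, the forward equation $\partial_uQ^{u;x}_\delta=\tfrac12\Delta_\delta Q^{u;x}_\delta$ holds, and integrating it componentwise over $[s,t]$ and summing gives $\sum_x|Q^{t;x}_\delta-Q^{s;x}_\delta|\le\tfrac12\int_s^t\sum_x|\Delta_\delta Q^{u;x}_\delta|\,du$. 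The point is then to establish $\sum_x|\Delta_\delta Q^{u;x}_\delta|\le K/u$. I would obtain this by factoring the discrete Laplacian as $\Delta_\delta=-D_\delta D_{-\delta}$, where $D_{\pm\delta}f(x)=\delta^{-1}(f(x\pm\delta)-f(x))$; since the $D_{\pm\delta}$ commute with convolution on $\mathbb X$, the semigroup identity $Q^{u;\cdot}_\delta=Q^{u/2;\cdot}_\delta*Q^{u/2;\cdot}_\delta$ yields $\Delta_\delta Q^{u;\cdot}_\delta=-(D_\delta Q^{u/2;\cdot}_\delta)*(D_{-\delta}Q^{u/2;\cdot}_\delta)$, whence, by Young's convolution inequality and Lemma~\ref{5thinequality} applied with $z=\pm\delta$ and time $u/2$,
\[
\sum_{x\in\mathbb X}\bigl|\Delta_\delta Q^{u;x}_\delta\bigr|\;\le\;\Bigl(\tfrac1\delta\sum_{x\in\mathbb X}\bigl|Q^{u/2;x+\delta}_\delta-Q^{u/2;x}_\delta\bigr|\Bigr)^{2}\;\le\;\Bigl(\tfrac1\delta\cdot\tfrac{K\delta}{\sqrt{u/2}}\Bigr)^{2}\;\le\;\frac{K}{u}.
\]
Substituting this bound gives $\sum_x|Q^{t;x}_\delta-Q^{s;x}_\delta|\le\tfrac K2\log(t/s)$.

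Now split on $s$. If $s\ge t/2$, then $\log(t/s)=\log\!\bigl(1+\tfrac{t-s}{s}\bigr)\le\tfrac{t-s}{s}\le\tfrac{2(t-s)}{t}$, so the reduction target holds; if $s<t/2$, then $(t-s)/t>\tfrac12$ and the trivial bound $\le 2\le 4(t-s)/t$ finishes it — and, incidentally, no smallness of $\delta$ is used. For the integrated inequality, substitute $v=t-s$ (with the convention $Q^{w;\cdot}_\delta\equiv 0$ for $w<0$): the integral becomes $\int_0^t\sum_x|Q^{v;x}_\delta-Q^{v-(t-r);x}_\delta|\,dv$, whose integrand equals $\sum_x Q^{v;x}_\delta=1$ for $v<t-r$ and is at most $2\wedge K(t-r)/v$ for $t-r\le v\le t$ by the pointwise bound just proved. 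Integrating — splitting the range at $v=t-r$ and at $v=\tfrac12K(t-r)$ — produces a $(t-r)$ term and a $(t-r)\log\tfrac{t}{t-r}$ term, i.e.\ the asserted $K\bigl(1+\log\tfrac{t}{t-r}\bigr)(t-r)$.

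The only genuinely nontrivial step is $\sum_x|\Delta_\delta Q^{u;x}_\delta|\le K/u$: bounding the second difference $Q^{u;x+\delta}_\delta-2Q^{u;x}_\delta+Q^{u;x-\delta}_\delta$ by the triangle inequality alone would only give $O(\delta^{-1}u^{-1/2})$, which is worthless once $u\gg\delta^2$, so one must exploit cancellation. The factorization $\Delta_\delta=-D_\delta D_{-\delta}$ combined with the semigroup property converts the problem into a product of two honest first‑difference $\ell^1$‑bounds, each of which is precisely the content of Lemma~\ref{5thinequality}; a more computational alternative would be a local‑CLT comparison of $Q^{u;x}_\delta$ with $\delta$ times the Gaussian heat kernel (which is presumably the source of the restriction $\delta<\delta^{*}$ in the statement), but the factorization argument avoids any such restriction, and everything else above is bookkeeping.
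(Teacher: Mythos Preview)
Your proof is correct and takes a genuinely different route from the paper. The paper argues via the central limit theorem: it identifies $\sum_x|Q^{t;x}_\delta-Q^{s;x}_\delta|$ with a total variation distance, observes that the Gaussian limit as $\delta\to 0$ satisfies the bound $K(\sqrt t-\sqrt s)/\sqrt t$, and then asserts that for $\delta$ below some threshold $\delta^*$ the discrete quantity inherits the same bound. Your argument instead stays entirely at the discrete level: you differentiate in time via the forward equation, factor $\Delta_\delta=-D_\delta D_{-\delta}$, split the semigroup at time $u/2$, and feed in Lemma~\ref{5thinequality} twice to get $\sum_x|\Delta_\delta Q^{u;x}_\delta|\le K/u$, from which the pointwise bound follows by integration. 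The payoff is twofold. First, your argument is self-contained and fully rigorous, whereas the paper's CLT step implicitly requires a uniformity (in $s,t$) of the normal approximation that is not spelled out. Second, as you note, your proof never needs $\delta<\delta^*$; the statement holds for all $\delta$, and the restriction in the lemma is an artifact of the paper's method rather than of the result. The paper's approach is shorter to write down and conceptually transparent (total variation at two times for a diffusive walk should scale like the Gaussian case), but yours is the one that actually closes the argument without appeal to unproved uniformity, and it has the pleasant feature of bootstrapping Lemma~\ref{6thinequality} directly from Lemma~\ref{5thinequality}.
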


This is a central limit theorem type argument, which we briefly present
here for convenience.

\begin{proof}
To see the first inequality, note that the total variation distance is
\begin{equation}
\left\|\Lambda_\delta^{x}(s)- \Lambda_\delta^{x}(t)\right\|=\sup_k\left|\Lambda_\delta^{x}(s){\left([-k,k]\right)} -\Lambda_\delta^{x}(t){\left([-k,k]\right)}\right|,
\label{ttlvdist}
\end{equation}
where $\Lambda_\delta^{x}(t)$ is as in the proof of Lemma \ref{5thinequality}.
By the central limit theorem, we see that the limit as $\delta \to 0$ in \eqref{ttlvdist} is bounded by a constant multiple of
$(t^{1/2} - s^{1/2}) / t^{1/2}$; then there is a $\delta^*>0$ such that, whenever $\delta<\delta^*$, the total variation distance in \eqref{ttlvdist} is bounded
by a constant multiple of $(t^{1/2} - s^{1/2}) / t^{1/2}$ as well, with a possibly different constant.
So, for small enough $\delta$, we get the first inequality.   The second inequality follows upon using the fact that
$(t^{1/2} - s^{1/2}) / t^{1/2} \leq (t-s) / t$ and integrating, using the convention that $Q_{\delta}^{t;x}=0$ if $t<0$.
\end{proof}

 From this point on, and without explicitly stating it, we will assume
 $\delta<\delta^*$ whenever needed.  Consequently, we have the following statement.

\begin{lem}
There is a constant $K$, depending only on $T$, such that
$$
\int_0^t\sum_{x\in\mathbb{X}} \left|{Q^{t-s;x}_\delta - Q^{r-s;x}_\delta}\right|ds
\le K\big[(t-r)+ (t-r)^{1-e^{-1}}\big],
$$
 for $0\le r<t\le T$, and with the convention that $Q_{ \delta}^{t;x}=0$ if $t<0$.
 \label{7thinequality}
 \end{lem}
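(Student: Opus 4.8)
The plan is to reduce Lemma~\ref{7thinequality} directly to the second inequality in Lemma~\ref{6thinequality} by a crude but uniform bound on the logarithmic factor. Write $h = t - r$, so $0 < h \le T$, and set $\ell(h) = 1 + \log(t/(t-r)) = 1 + \log(t/h)$. The point is that $\ell(h)$ is only mildly singular as $h \to 0$: since $t \le T$, we have $\ell(h) \le 1 + \log(T/h)$, and $\log(T/h)$ grows slower than any negative power of $h$. So the first step is to record the elementary inequality
\begin{equation*}
  1 + \log\!\Big[\frac{t}{t-r}\Big] \;\le\; K_T \, (t-r)^{-e^{-1}}
  \qquad\text{for } 0 < t-r \le t \le T,
\end{equation*}
for a constant $K_T$ depending only on $T$. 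Granting this, multiplying both sides by $(t-r)$ and applying Lemma~\ref{6thinequality} gives
\begin{equation*}
  \int_0^t\sum_{x\in\mathbb{X}} \big|{Q^{t-s;x}_\delta - Q^{r-s;x}_\delta}\big|\,ds
  \;\le\; K\big(1 + \log\big[\tfrac{t}{t-r}\big]\big)(t-r)
  \;\le\; K K_T\,(t-r)^{1-e^{-1}},
\end{equation*}
which is even a little stronger than the claimed bound $K[(t-r) + (t-r)^{1-e^{-1}}]$; the two-term form in the statement is then immediate since $(t-r) \le T^{e^{-1}}(t-r)^{1-e^{-1}}$ on $[0,T]$, or one can simply keep the dominant term. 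So essentially all of the work is in the scalar inequality above.

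To prove that scalar inequality, the cleanest route is to observe that $\log$ is dominated by any positive power: for every $\alpha > 0$ one has $\log u \le \tfrac{1}{\alpha}u^{\alpha}$ for $u \ge 1$ (since $\log v \le v$ for $v \ge 1$ applied to $v = u^\alpha$, divided by $\alpha$). Apply this with $u = t/h \ge 1$ and $\alpha = e^{-1}$: then $\log(t/h) \le e\,(t/h)^{e^{-1}} \le e\,T^{e^{-1}} h^{-e^{-1}}$. Adding the constant $1$ and using $1 \le h^{-e^{-1}} T^{e^{-1}}$ (valid since $h \le T$) absorbs it into the same power, giving the bound with $K_T = (1+e)T^{e^{-1}}$ (or any convenient explicit constant). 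This is entirely routine calculus; there is no genuine obstacle.

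The one thing to be slightly careful about is the range of $h$: the estimate degrades as $h \to 0$ through the factor $h^{-e^{-1}}$, which is exactly why the exponent $1 - e^{-1}$ (and not $1$) appears in the conclusion — the $\log$ singularity in Lemma~\ref{6thinequality} is the price paid for the temporal Hölder continuity of the walk density, and converting it to a clean power law necessarily costs an arbitrarily small but positive exponent, here fixed at $e^{-1}$ for concreteness. If one wanted the sharper statement one could keep the exponent as $1 - \eta$ for any $\eta \in (0,1)$ with a constant blowing up as $\eta \to 0$; the choice $\eta = e^{-1}$ just makes the constant in the preceding estimate clean. Since the lemma is only used downstream to feed Kolmogorov-type temporal-difference bounds (where any exponent bounded away from $0$ suffices to get Hölder continuity up to exponent $\tfrac14$ in time), this fixed choice loses nothing. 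The only genuinely substantive input — the central-limit-theorem total-variation estimate — has already been carried out in Lemma~\ref{6thinequality}, so nothing here requires new ideas.
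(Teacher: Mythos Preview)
Your proposal is correct and follows essentially the same approach as the paper: reduce to Lemma~\ref{6thinequality} and bound the logarithmic factor by a constant times $(t-r)^{-e^{-1}}$. The only difference is cosmetic: the paper proves the scalar inequality $\log[(T\vee1)/(t-r)] \le (T\vee1)(t-r)^{-e^{-1}}$ by directly optimizing $\ell(x)=\log[(T\vee1)/x]-(T\vee1)x^{-e^{-1}}$ via calculus, whereas you invoke the standard bound $\log u \le \alpha^{-1}u^{\alpha}$ with $\alpha=e^{-1}$; and the paper keeps the ``$1$'' from $1+\log[\cdot]$ as a separate $(t-r)$ term rather than absorbing it into the power, which is why the two-term form appears in the statement.
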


\begin{proof}  In light of Lemma \ref{6thinequality}, it suffices to show that
\begin{equation}
\log\big[\frac{T\vee1}{t-r}\big]\le (T\vee1){(t-r)}^{-e^{-1}},
\label{e0}
\end{equation}
for  $0\le r<t\le T$.  So, setting $x=t-r$ and letting
$\ell(x)\overset{\triangle}{=} \log\left[{(T\vee1)}/{x}\right]-(T\vee1){x}^{-e^{-1}}$, we see from an easy calculus computation that
$$
\max\ell(x)=\ell([{(T\vee1)}/{e}]^e)=(1-e)\log[T\vee1]\le0,
$$
proving \eqref{e0}.
\end{proof}

\subsection{Bounds on moments of $\tilde{U}^x(t)$}

The main result of this subsection is as follows.

\begin{prop} \label{Expbd}
There exists a constant $K$ depending only on $q$, $\max_x|\xi(x)|$, and $T$ such that
\begin{equation*}
M_q(t) \leq K\exp{\{Kt\}}; \quad \forall\ 0\le t\le T, \ q\ge1,
\end{equation*}
where $M_q(t) = \sup_x \mathbb{E}|\tilde{U}^x(t)|^{2q}$.   In particular, $M_q$ is bounded on $\mathbb{T}$ for all $q\ge1,$ and
\begin{equation}
\sup_{t\in\mathbb{T}} M_q(t)\le K\exp{\{KT\}}.
\label{bddMts}
\end{equation}
\end{prop}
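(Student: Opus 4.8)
The plan is to derive a Gronwall-type integral inequality for $M_q(t)=\sup_x\mathbb{E}|\tilde U^x(t)|^{2q}$ directly from the discrete Green function representation \eqref{GSDDEeq}. Starting from \eqref{GSDDEeq} with $\epsilon_1=\epsilon_2=1$, I would split $\tilde U^x(t)$ into three pieces: the initial term $I_0(t,x)=\sum_y Q^{t;x-y}_\delta\xi(y)$, the drift term $I_b(t,x)=\sum_y\int_0^t Q^{t-s;x-y}_\delta b(\tilde U^y_n(s))\,ds$, and the stochastic term $I_a(t,x)=\sum_y\int_0^t Q^{t-s;x-y}_\delta a(\tilde U^y_n(s))\,\delta_n^{-1/2}dW^y_n(s)$. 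Using $|u_1+u_2+u_3|^{2q}\le 3^{2q-1}(|u_1|^{2q}+|u_2|^{2q}+|u_3|^{2q})$, it suffices to bound the $2q$-th moment of each piece. For $I_0$: since $\sum_y Q^{t;x-y}_\delta=1$ (it is a probability density by Remark \ref{Qasdensity}), Jensen's inequality gives $|I_0(t,x)|^{2q}\le\max_x|\xi(x)|^{2q}$, a constant.

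For the drift term $I_b$, I would again exploit that $Q^{t-s;x-y}_\delta$ summed over $y$ is $1$, but now we integrate over $s\in[0,t]$, so the total mass is $t$. Writing $I_b(t,x)=t\cdot\frac{1}{t}\sum_y\int_0^t Q^{t-s;x-y}_\delta b(\tilde U^y_n(s))\,ds$ and applying Jensen with respect to the normalized measure $\frac{1}{t}Q^{t-s;x-y}_\delta\,ds$, then the linear growth bound $b^2(u)\le K(1+u^2)$ from \eqref{acond}(b) (hence $|b(u)|^{2q}\le K(1+|u|^{2q})$), I get
\begin{equation*}
\mathbb{E}|I_b(t,x)|^{2q}\le K t^{2q-1}\int_0^t\sum_y Q^{t-s;x-y}_\delta\big(1+\mathbb{E}|\tilde U^y_n(s)|^{2q}\big)\,ds\le K T^{2q-1}\int_0^t\big(1+M_q(s)\big)\,ds.
\end{equation*}
For the stochastic term $I_a$, I would apply the Burkholder–Davis–Gundy inequality to the martingale in $t$; its quadratic variation is $\sum_y\int_0^t (Q^{t-s;x-y}_\delta)^2 a^2(\tilde U^y_n(s))\,\delta_n^{-1}ds$. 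Here the key random-walk estimate is Lemma \ref{2ndQinequality}: $\sum_y (Q^{s;x-y}_\delta)^2\le K\delta/\sqrt s$, which absorbs the $\delta_n^{-1}$ factor and leaves an integrable singularity. A further Hölder/Jensen step in the measure $(Q^{t-s;x-y}_\delta)^2\delta_n^{-1}ds$ (whose total mass over $[0,t]$ is $\le K\sqrt t$ by the integrated form of Lemma \ref{2ndQinequality}) combined with $a^2(u)\le K(1+u^2)$ yields
\begin{equation*}
\mathbb{E}|I_a(t,x)|^{2q}\le K_q\,\big(\sqrt T\big)^{q-1}\int_0^t\frac{1}{\sqrt{t-s}}\big(1+M_q(s)\big)\,ds.
\end{equation*}

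Collecting the three bounds and taking $\sup_x$ gives $M_q(t)\le K+K\int_0^t(t-s)^{-1/2}(1+M_q(s))\,ds$ for $0\le t\le T$ with $K=K(q,\max_x|\xi|,T)$. A preliminary step I should not skip: to run BDG legitimately I first need to know $M_q(t)<\infty$ for each $t$; this follows from a standard localization/stopping-time argument (stop when $\sup_{s\le t}|\tilde U^x_n(s)|$ exceeds a level, derive the same inequality for the stopped process, and let the level go to infinity using Fatou), exactly as in the SDE case — I would remark that this is routine. The final step is a singular Gronwall lemma: an inequality of the form $m(t)\le A+B\int_0^t(t-s)^{-1/2}m(s)\,ds$ implies $m(t)\le C e^{Ct}$ for a constant $C$ depending on $A,B$ (and $T$); one can cite this or prove it by iterating the kernel twice so that $(t-s)^{-1/2}$ convolved with itself becomes bounded, then applying ordinary Gronwall. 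This produces $M_q(t)\le Ke^{Kt}$, and \eqref{bddMts} follows by taking $t=T$. I expect the main obstacle to be bookkeeping the dependence of constants on $q$ through the BDG constant and the repeated Jensen steps, and making sure the bound is genuinely uniform in $n$ — which it is, precisely because Lemma \ref{2ndQinequality} is uniform in $n$ (the $\delta_n^{-1}$ from the noise normalization cancels against the $\delta_n$ in that lemma). Everything else is a routine adaptation of the zero-drift argument in \cite{ASDDE1}, the only new ingredient being the elementary drift estimate above.
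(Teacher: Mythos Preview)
Your proposal is correct and follows essentially the same route as the paper: the paper's Lemma \ref{1stboundonUtilde} carries out exactly your three-term split with Burkholder on $I_a$, Jensen (against the normalized measures $\mu_t^x/|\mu_t^x|$ and $\nu_t^x/|\nu_t^x|$) on both $I_a$ and $I_b$, and Lemma \ref{2ndQinequality} to control $|\mu_t^x|$, arriving at $M_q(t)\le K\big(1+\int_0^t[(t-s)^{-1/2}+1]M_q(s)\,ds\big)$; then Corollary \ref{befG} iterates once to remove the square-root singularity and ordinary Gronwall finishes. The only cosmetic differences are that the paper keeps the singular and nonsingular drift contributions separate rather than absorbing one into the other, and it does not spell out the localization step you mention.
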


 The proof of Proposition \ref{Expbd} proceeds via the following lemma and its
 corollary.

\begin{lem} There exists a constant $K$ depending only on $q\ge1$, $\max_x |\xi(x)|$, and $T$ such that
\begin{equation*}
M_q(t) \leq K \Big(1 + \int_0^t \big[\frac{M_q(s)}{\sqrt{t - s}}+M_q(s)\big]ds\Big);
  \quad \forall0\le t\le T,\ q\ge1.
\end{equation*}
\label{1stboundonUtilde}
\end{lem}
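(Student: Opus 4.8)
The plan is to start from the Green function formulation \eqref{GSDDEeq} (valid by Lemma \ref{GSDDE} under the hypotheses \eqref{acond}), fix $q\ge1$ and $t\le T$, and apply the elementary inequality $|a_1+a_2+a_3|^{2q}\le 3^{2q-1}(|a_1|^{2q}+|a_2|^{2q}+|a_3|^{2q})$ to the three terms of $\tilde U^x(t)$: the stochastic (noise) term, the drift term, and the initial-data term $\sum_y Q_\delta^{t;x-y}\xi(y)$. The last term is bounded in absolute value by $\max_x|\xi(x)|$ since $\sum_y Q_\delta^{t;x-y}=1$ (the walk density is a probability), so it contributes a constant to the bound. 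First I would dispose of that deterministic piece, then estimate the drift and noise pieces separately and take $\sup_x$ at the end.

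For the drift term $\int_0^t\sum_y Q_\delta^{t-s;x-y} b(\tilde U^y(s))\,ds$, I would write it as $\int_0^t\sum_y Q_\delta^{t-s;x-y}\cdot b(\tilde U^y(s))\,ds$ and apply a Jensen/H\"older step with respect to the (time- and space-dependent) measure $Q_\delta^{t-s;x-y}\,ds$ on $[0,t]\times\mathbb{X}$; its total mass is $\sum_y\int_0^t Q_\delta^{t-s;x-y}\,ds = t\le T$, a bounded constant. This gives
\[
\Big|\int_0^t\sum_y Q_\delta^{t-s;x-y} b(\tilde U^y(s))\,ds\Big|^{2q}
\le T^{2q-1}\int_0^t\sum_y Q_\delta^{t-s;x-y}\,|b(\tilde U^y(s))|^{2q}\,ds .
\]
Taking expectations, using the linear growth bound $b^2(u)\le K(1+u^2)$ (so $|b(u)|^{2q}\le K^q(1+u^2)^q\le K'(1+u^{2q})$), and then $\mathbb{E}|\tilde U^y(s)|^{2q}\le M_q(s)$, the inner sum over $y$ collapses again to $1$, leaving a term bounded by $K\big(1+\int_0^t M_q(s)\,ds\big)$. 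This is exactly the $\int_0^t M_q(s)\,ds$ contribution.

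For the stochastic term I would use the Burkholder–Davis–Gundy inequality applied to the continuous martingale $s\mapsto\sum_y\int_0^s Q_\delta^{t-r;x-y} a(\tilde U^y(r))\,\delta^{-1/2}dW^y(r)$ (here the $W^y$ are independent in $y$, so the quadratic variation is the sum of the individual ones), giving a bound by a constant times the $q$-th moment of the quadratic variation
\[
\Big(\sum_y\int_0^t (Q_\delta^{t-s;x-y})^2\,a^2(\tilde U^y(s))\,\delta^{-1}\,ds\Big)^q .
\]
Now apply Jensen/H\"older with respect to the finite measure $\delta^{-1}(Q_\delta^{t-s;x-y})^2\,ds$, whose total mass over $s\in[0,t]$ and $y\in\mathbb{X}$ is $\delta^{-1}\int_0^t\sum_y (Q_\delta^{s;x-y})^2\,ds\le K\sqrt{t/1}=K\sqrt t\le K\sqrt T$ by Lemma \ref{2ndQinequality} (with $\epsilon_1=1$). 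This turns the $q$-th power of the integral into a constant multiple of $\delta^{-1}\int_0^t\sum_y (Q_\delta^{t-s;x-y})^2\,(a^2(\tilde U^y(s)))^q\,ds$; taking expectations, using $a^2(u)\le K(1+u^2)$ so $(a^2(u))^q\le K'(1+u^{2q})$, and $\mathbb{E}|\tilde U^y(s)|^{2q}\le M_q(s)$, and finally $\delta^{-1}\sum_y(Q_\delta^{t-s;x-y})^2\le K/\sqrt{t-s}$ again from Lemma \ref{2ndQinequality}, yields a bound of the form $K\big(1+\int_0^t M_q(s)/\sqrt{t-s}\,ds\big)$. Combining the three pieces, taking $\sup_x$ (all bounds are uniform in $x$ and $n$), and absorbing constants gives the claimed inequality.

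The main obstacle is the bookkeeping in the stochastic term: one must be careful to split the quadratic variation weight $\delta^{-1}(Q_\delta^{t-s;x-y})^2$ correctly so that \emph{one} factor of the total mass $\int_0^t\sum_y\delta^{-1}(Q_\delta^{s;x-y})^2\,ds\le K\sqrt t$ is used for the Jensen normalization while the \emph{remaining} factor $\delta^{-1}\sum_y(Q_\delta^{t-s;x-y})^2\le K/\sqrt{t-s}$ is kept inside the $s$-integral to produce the integrable singular kernel $1/\sqrt{t-s}$; getting a genuine $1/\sqrt{t-s}$ rather than a non-integrable $1/\sqrt s$ inside $M_q(s)$ is what makes the subsequent Gronwall-type iteration in Proposition \ref{Expbd} work. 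A secondary point is justifying BDG for the infinite sum of independent stochastic integrals over $\mathbb{X}$, but under \eqref{acond} and the a priori finiteness of the relevant moments for the weak solution from Lemma \ref{GSDDE}, this is routine (the sum is an $L^2$-convergent martingale with the stated quadratic variation).
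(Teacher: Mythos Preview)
Your proposal is correct and follows essentially the same approach as the paper: split $\tilde U^x(t)$ into deterministic, drift, and stochastic pieces via \eqref{GSDDEeq}, apply Burkholder to the noise term, then use Jensen against the normalized measures $d\nu_t^x(s,y)=Q_\delta^{t-s;x-y}\,ds$ and $d\mu_t^x(s,y)=\delta^{-1}(Q_\delta^{t-s;x-y})^2\,ds$ (whose total masses are bounded by $T$ and $K\sqrt{T}$ respectively), invoke the growth conditions on $a,b$, and finish with Lemma~\ref{2ndQinequality} to extract the $1/\sqrt{t-s}$ kernel. Your careful remark about keeping one factor of the mass outside (via $|\mu_t^x|^{q-1}$) while retaining $\delta^{-1}\sum_y(Q_\delta^{t-s;x-y})^2\le K/\sqrt{t-s}$ inside the $s$-integral is exactly the mechanism the paper uses.
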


\begin{proof}
Fix $q\ge1$, let $\tilde{U}_D^{x}(t)\overset{\triangle}{=} \sum_{y\in\mathbb{X}} Q^{t;x,y}_{\delta_n}\xi(y)$ (the deterministic part of $\tilde{U}$).  Then,  for any $(t,x)\in\mathbb{T}\times\mathbb{X}$, we have:
\begin{equation}
\begin{split}
\mathbb{E}|\tilde{U}^x(t)|^{2q}& = \mathbb{E}
\Big| \sum_{y\in {\mathbb{X}}}\int_0^t Q^{t-s; x,y}_\delta
\big[\frac{a(\tilde{U}^y(s))}{\sqrt{\delta}} dW^{y}(s)
+ b(\tilde{U}^y(s))ds\big]+ \tilde{U}_D^{x}(t) \Big|^{2q} \\
& \leq K\Big(\mathbb{E}\Big|\sum_{y\in {\mathbb{X}}}\int_0^t
Q^{t-s; x,y}_\delta\frac{a(\tilde{U}^y(s))}{\sqrt{\delta}} dW^{y}(s) \Big|^{2q}
+\left|\tilde{U}_D^{x}(t) \right|^{2q}\Big)\\
&+ K\mathbb{E}\Big|\sum_{y\in\mathbb{X}}\int_0^t
Q^{t-s; x,y}_\delta b(\tilde{U}^y(s))ds\Big|^{2q}.
 \end{split}
 \label{befBurkholder}
 \end{equation}
Applying Burkholder inequality to
\[
V^x(t) = \sum_{y\in {\mathbb{X}}}\int_0^t Q^{t-s; x,y}_\delta
\frac{a(\tilde{U}^y(s))} {\sqrt{\delta}}dW^{y}(s)
\]
Reduces \eqref{befBurkholder} to
\begin{equation}
\begin{split}
\mathbb{E}|\tilde{U}^x(t)|^{2q}&\leq K\Big(\mathbb{E}\Big|\sum_{y\in {\mathbb{X}}}
\int_0^t {(Q^{t-s; x,y}_\delta)}^2\frac{a^2(\tilde{U}^y(s))}{\delta}ds\Big|^q
+ |\tilde{U}_D^{x}(t)|^{2q}\Big) \\
&\quad +K\mathbb{E}\Big|\sum_{y\in\mathbb{X}}
\int_0^t Q^{t-s; x,y}_\delta b(\tilde{U}^y(s))ds\Big|^{2q}.
\end{split}
\label{afterBurkholder}
\end{equation}
Now, for a fixed point
$(t,x)\in\mathbb{T}\times\mathbb{X}$ let $\mu_t^x$ and $\nu_t^x$ be the measures
on $[0,t] \times\mathbb{X}$ defined by $d\mu_t^x(s,y)
=({(Q^{t-s; x,y}_\delta)}^2/\delta) ds$ and $d\nu_t^x(s,y)={Q^{t-s; x,y}_\delta}ds$,
 and let $|\mu_t^x| = \mu_t^x([0,t] \times \mathbb{X})$ and
$|\nu_t^x| = \nu_t^x([0,t] \times \mathbb{X})$. Then, we can rewrite \eqref{afterBurkholder} as
 \begin{equation}
\begin{split}
 \mathbb{E}|\tilde{U}^x(t)|^{2q}
\leq &K\Big(\mathbb{E}\left|\int_{[0,t] \times \mathbb{X}}a^2(\tilde{U}^y(s))
\frac{d\mu_t^x(s,y)}{|\mu_t^x|}\right|^q {|\mu_t^x|}^q +|\tilde{U}_D^{x}(t)|^{2q}
\Big)\\
 &+ K \mathbb{E}\Big|\int_{[0,t] \times \mathbb{X}}b(\tilde{U}^y(s))
 \frac{d\nu_t^x(s,y)}{|\nu_t^x|}\Big|^{2q} {|\nu_t^x|}^{2q}.
\end{split}
\label{afterBurkholder2}
\end{equation}
Observing that $\mu_t^x/|\mu_t^x|$ and $\nu_t^x/|\nu_t^x|$ are probability measures, we apply Jensen's inequality, the growth condition on $a$ and $b$,
and other elementary inequalities to \eqref{afterBurkholder2} to obtain
\begin{align*}
\mathbb{E}|\tilde{U}^x(t)|^{2q}
&\leq K\Big(\mathbb{E}\Big[\int_{[0,t] \times \mathbb{X}}\left|a(\tilde{U}^y(s))
\right|^{2q}\frac{d\mu_t^x(s,y)}{|\mu_t^x|}\Big]
{|\mu_t^x|}^q + |\tilde{U}_D^{x}(t)|^{2q}\Big)\\
&\quad +K\mathbb{E}\Big[\int_{[0,t] \times \mathbb{X}}
\left|b(\tilde{U}^y(s))\right|^{2q}\frac{d\nu_t^x(s,y)}{|\nu_t^x|}\Big]
{|\nu_t^x|}^{2q}\\
& \leq K\Big[\int_{[0,t] \times \mathbb{X}}
\left(1 + \mathbb{E}|\tilde{U}^y(s)|^{2q}\right){d\mu_t^x(s,y)}\Big]
{|\mu_t^x|}^{q-1} + K |\tilde{U}_D^{x}(t)|^{2q} \\
&\quad +K \Big[\int_{[0,t] \times \mathbb{X}}\left(1 + \mathbb{E}|
\tilde{U}^y(s)|^{2q}\right)d\nu_t^x(s,y)\Big]{|\nu_t^x|}^{2q-1}\\
&= K\Big(\Big[\sum_{y\in {\mathbb{X}}}\int_0^t\frac{(Q^{t-s; x,y}_\delta)^2}
{\delta}\left(1 + \mathbb{E}|\tilde{U}^y(s)|^{2q}\right)ds\Big]
{|\mu_t^x|}^{q-1}+|\tilde{U}_D^{x}(t)|^{2q} \Big)\\
&\quad +K\Big[\sum_{y\in {\mathbb{X}}}\int_0^t{Q^{t-s; x,y}_\delta}
\left(1 + \mathbb{E}|\tilde{U}^y(s)|^{2q}\right)ds\Big]{|\nu_t^x|}^{2q-1}.
\end{align*}
Using the simple fact that $\sum_{y\in {\mathbb{X}}}{Q^{t-s; x,y}_\delta}=1$
and Lemma \ref{2ndQinequality}, we see that  $|\nu_t^x|$ and $|\mu_t^x|$ are
uniformly bounded for $t\leq T$.
So, using the boundedness of $\xi$, and hence of $\tilde{U}_D^{x}(t)$,
Lemma \ref{2ndQinequality} and the definition of $M_q(s)$, we get
\begin{align*}
\mathbb{E}|\tilde{U}^x(t)|^{2q}
&\leq K\Big(1+\sum_{y\in {\mathbb{X}}}
\int_0^t\big[\frac{(Q^{t-s; x,y}_\delta)^2}{\delta}M_q(s)+{Q^{t-s; x,y}_\delta}
M_q(s)\big]ds\Big)\\
&\overset{R_1}{\leq} K \Big(1 + \int_0^t\big[\frac{M_q(s)}{\sqrt{t - s}}+M_q(s)\big]ds
\Big).
\end{align*}
Here, $R_1$ follows from Lemma \ref{2ndQinequality} and the fact that
$\sum_{y\in {\mathbb{X}}}{Q^{t-s; x,y}_\delta}=1$. This implies that
\begin{equation*}
M_q(t) \leq K \Big(1 + \int_0^t\big[\frac{M_q(s)}{\sqrt{t - s}}+M_q(s)\big]ds\Big).
\end{equation*}
\end{proof}

\begin{cor}
There exists a constant $K$ depending only on $q$, $\max_x |\xi(x)|$, and $T$
such that
\begin{equation*}
M_q(t) \leq K\Big(1+ \int_0^t M_q(s)ds \Big);\quad 0\le t\le T,\ q\ge1.
\end{equation*}
\label{befG}
\end{cor}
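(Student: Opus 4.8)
The plan is to remove the weakly singular kernel $(t-s)^{-1/2}$ appearing in Lemma \ref{1stboundonUtilde} by substituting that estimate into itself once: convolving $(t-s)^{-1/2}$ with $(s-r)^{-1/2}$ produces a constant, so after a single iteration the bound is already of the asserted Gronwall form, with only the clean $\int_0^t M_q(s)\,ds$ term surviving.

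Write the conclusion of Lemma \ref{1stboundonUtilde} as
\begin{equation*}
M_q(t)\le K\Big(1+\int_0^t\frac{M_q(s)}{\sqrt{t-s}}\,ds+\int_0^t M_q(s)\,ds\Big),\qquad 0\le t\le T,
\end{equation*}
with $K=K(q,\max_x|\xi(x)|,T)$. For the manipulations below to be legitimate (Tonelli, and the bound not being vacuous) one first wants $M_q$ finite and locally bounded on $\mathbb{T}$; this is standard from the SDDE side, but the cleanest self-contained route is to run the whole argument with $M_q$ replaced by the truncation $M_q^{(N)}(t)=\sup_x\mathbb{E}\big(|\tilde U^x(t)|^{2q}\wedge N\big)$, which is trivially bounded by $N$, to note that Lemma \ref{1stboundonUtilde} holds verbatim for $M_q^{(N)}$ with the same constant $K$, and then to pass to $N\nearrow\infty$ by monotone convergence in the final inequality.

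Now insert the displayed inequality for $M_q(s)$ into the singular integral $\int_0^t M_q(s)(t-s)^{-1/2}\,ds$, producing three terms. The first is $K\int_0^t(t-s)^{-1/2}\,ds=2K\sqrt t\le 2K\sqrt T$. In the second, $K\int_0^t(t-s)^{-1/2}\big(\int_0^s M_q(r)(s-r)^{-1/2}\,dr\big)ds$, interchange the order of integration and use the Beta-function identity $\int_r^t(t-s)^{-1/2}(s-r)^{-1/2}\,ds=B(\tfrac12,\tfrac12)=\pi$, valid for every $r<t$; this term equals $\pi K\int_0^t M_q(r)\,dr$. In the third, $K\int_0^t(t-s)^{-1/2}\big(\int_0^s M_q(r)\,dr\big)ds$, bound $\int_0^s M_q(r)\,dr\le\int_0^t M_q(r)\,dr$ and pull it out, leaving $2K\sqrt t\int_0^t M_q(r)\,dr\le 2K\sqrt T\int_0^t M_q(r)\,dr$. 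Altogether
\begin{equation*}
\int_0^t\frac{M_q(s)}{\sqrt{t-s}}\,ds\le \tilde K\Big(1+\int_0^t M_q(r)\,dr\Big),\qquad 0\le t\le T,
\end{equation*}
for a constant $\tilde K$ depending only on $K$ and $T$. Substituting this back into the estimate for $M_q(t)$ and combining the two $\int_0^t M_q\,ds$ contributions yields $M_q(t)\le K'\big(1+\int_0^t M_q(s)\,ds\big)$ with $K'=K'(q,\max_x|\xi(x)|,T)$, which is the corollary.

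I do not expect any genuine obstacle here: the whole content is the standard "one iteration kills the $(t-s)^{-1/2}$ singularity" computation, whose only delicate point is the a priori local finiteness of $M_q$, handled by the truncation above; the output is precisely the Gronwall-ready inequality to be fed into Proposition \ref{Expbd}.
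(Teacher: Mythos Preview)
Your proof is correct and follows essentially the same route as the paper: both substitute the bound of Lemma \ref{1stboundonUtilde} into its own singular term, swap the order of integration, and exploit the Beta-integral identity $\int_r^t(t-s)^{-1/2}(s-r)^{-1/2}\,ds=\pi$ to turn the convolution into a constant times $\int_0^t M_q(r)\,dr$. Your added truncation $M_q^{(N)}$ to justify Tonelli and exclude a vacuous bound is a nicety the paper omits, but the computational content is identical.
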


\begin{proof}
Iterating the bound in Lemma \ref{1stboundonUtilde} once, and changing the
order of integration, we obtain
\begin{equation}
\begin{split}
 M_q(t) \leq & K\Big\{1 + K\Big[\int_0^t\big(\frac{1}{\sqrt{t-s}}\big) ds \\
 &+\int_0^t{M_q(r)}\Big(\int_r^t\frac{1}{\sqrt{t - s}\sqrt{s-r}}ds
 +\int_r^t\frac{1}{\sqrt{t - s}}ds\Big)dr\\
 &+\int_0^tM_q(r)\Big(\int_r^t\frac{1}{\sqrt{t - s}}ds\Big)dr\Big]
 +\int_0^tM_q(s)ds\Big\}\\
 \le& K\Big(1+ \int_0^t M_q(s)ds \Big).
\end{split} \label{mom1}
\end{equation}
\end{proof}

Now the proof of Proposition \ref{Expbd} is a straightforward application of
 Gronwall's lemma to Corollary \ref{befG}.

\subsection{Bounds on spatial and temporal differences moments and tightness
of the approximating SDDEs}

Let $\tilde{U}^x(t)= \tilde{U}_R^x(t) + \tilde{U}_D^{x}(t)$, where $\tilde{U}_R^x(t)$ denotes the first two (random) terms on the r.h.s. of \eqref{GSDDEeq}.
It is easily seen (see \cite{SZ}) that the deterministic part $\tilde{U}_D^{x}(t)$ converges pointwise to the solution of the deterministic heat equation as $n\to\infty$
($\delta_n\to0$); i.e.,
\begin{equation}
\lim_{n\to\infty}\tilde{U}_{n,D}^x(t)= \int_{\mathbb{R}} G(t;x,y)\xi(y)dy,\hspace{0.5cm} \forall (t,x)\in\eus{R}_{T}.
\label{determ}
\end{equation}
So, to show weak convergence of  a subsequence of $\tilde{U}^x(t)$; it is enough to show tightness, and hence the weak convergence, of the random part $ \tilde{U}_R^x(t)$.
Using the inequalities of the previous two subsections, we obtain

\begin{lem}[Spatial differences]
There exists a constant $K$ depending only on $q$, $\max_x |\xi(x)|$, and $T$
such that
\[
\mathbb{E}\left| \tilde{U}_R^x(t) - \tilde{U}_R^y(t)\right|^{2q}
\le K\left(|x-y|^{q}+|x-y|^{2q}\right),
\]
for all $x,y \in \mathbb{X}$ and $t\in\mathbb{T}$.
\label{boundsonsdifferences}
\end{lem}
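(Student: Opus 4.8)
The plan is to bound the $2q$-th moment of the spatial difference $\tilde U_R^x(t)-\tilde U_R^y(t)$ by decomposing it, as in the bound on $M_q(t)$, into a stochastic-integral piece (coming from the $a$-term) and a Lebesgue-integral piece (coming from the $b$-term), and to apply the random-walk difference estimates of the first subsection to each. Write $z=x-y$. From the Green function formulation \eqref{GSDDEeq},
\begin{equation*}
\tilde U_R^x(t)-\tilde U_R^y(t)=\sum_{w\in\mathbb{X}}\int_0^t\big(Q_\delta^{t-s;x,w}-Q_\delta^{t-s;y,w}\big)\Big[\frac{a(\tilde U^w(s))}{\sqrt\delta}\,dW^w(s)+b(\tilde U^w(s))\,ds\Big],
\end{equation*}
so by the elementary inequality $|A+B|^{2q}\le K(|A|^{2q}+|B|^{2q})$ it suffices to bound the two pieces separately.

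For the stochastic-integral piece I would apply Burkholder's inequality exactly as in the proof of Lemma \ref{1stboundonUtilde}, which converts it to $\mathbb{E}\big|\sum_w\int_0^t (Q_\delta^{t-s;x,w}-Q_\delta^{t-s;y,w})^2 a^2(\tilde U^w(s))/\delta\,ds\big|^q$. I then introduce the measure $d\mu_t^{x,y}(s,w)=(Q_\delta^{t-s;x,w}-Q_\delta^{t-s;y,w})^2\,ds/\delta$, use Lemma \ref{3rdQinequality} (together with translation invariance of $Q_\delta$) to see that $|\mu_t^{x,y}|\le K|z|$ uniformly in $t\le T$, normalize to a probability measure, apply Jensen's inequality and the linear growth of $a$, and then use Proposition \ref{Expbd} to replace $\mathbb{E}|\tilde U^w(s)|^{2q}$ by the uniform bound $\sup_{s\le T}M_q(s)\le K\exp\{KT\}$. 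This yields a contribution bounded by $K|z|^{q}$. For the Lebesgue-integral piece I would argue the same way but with $d\nu_t^{x,y}(s,w)=|Q_\delta^{t-s;x,w}-Q_\delta^{t-s;y,w}|\,ds$; here Lemma \ref{5thinequality}, specifically \eqref{ineq2}, gives $|\nu_t^{x,y}|\le K\sqrt T\,|z|$, and after Jensen, the growth of $b$, and Proposition \ref{Expbd} this contributes $K|z|^{2q}$. Adding the two bounds gives the claimed estimate.

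The only genuinely new ingredient compared with \cite{ASDDE1} is the $b$-term, and the main (though mild) obstacle is precisely that the natural kernel there, $|Q_\delta^{t-s;x,w}-Q_\delta^{t-s;y,w}|$, is an $L^1$-in-$w$ quantity rather than an $L^2$ one, so one must use the coupling bound \eqref{ineq2} of Lemma \ref{5thinequality} rather than Lemma \ref{3rdQinequality}; this is why the $b$-contribution scales like $|z|^{2q}$ (hence the two different powers in the statement) and why the resulting exponent in the spatial Hölder continuity of $\tilde U_R$ is governed by the worse of $q$ and $2q$ over $2q$, i.e. approaches $\tfrac12$. A secondary point to be careful about is that all constants must be taken uniform in $n$: this is automatic because Lemmas \ref{3rdQinequality} and \ref{5thinequality} are stated uniformly in $\delta=\delta_n$ (with the standing assumption $\delta<\delta^*$), and Proposition \ref{Expbd} already provides an $n$-independent bound on $M_q$. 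Finally, since the interpolated process agrees with $\tilde U^x(t)$ on lattice points and is piecewise linear between them, the estimate for general $x,y\in\mathbb{R}$ follows from the lattice-point estimate by a standard interpolation argument, which I would relegate to a remark.
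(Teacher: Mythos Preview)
Your proposal is correct and follows essentially the same route as the paper: Burkholder on the stochastic piece, Jensen with the probability measures $\mu_t^{x,y}/|\mu_t^{x,y}|$ and $\nu_t^{x,y}/|\nu_t^{x,y}|$, the growth conditions on $a,b$ together with Proposition~\ref{Expbd}, and then Lemmas~\ref{3rdQinequality} and~\ref{5thinequality} to bound $|\mu_t^{x,y}|$ and $|\nu_t^{x,y}|$ respectively. One small inaccuracy: the constraint $\delta<\delta^*$ arises only in Lemma~\ref{6thinequality} (the temporal estimate), not in Lemmas~\ref{3rdQinequality} or~\ref{5thinequality}, so no such standing assumption is needed here.
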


\begin{proof}
Using Burkholder inequality, we have for any $(t,x,y)\in\mathbb{T}\times\mathbb{X}^2$
\begin{equation}
\begin{split}
\mathbb{E}\Big| \tilde{U}_R^x(t) - \tilde{U}_R^y(t)\Big|^{2q}
\leq& K\mathbb{E}{\Big|\sum_{z\in\mathbb{X}}\int_0^t
\frac{{\left(Q^{t-s; x,z}_\delta- Q^{t-s;y,z}_\delta\right)}^2}{\delta}a^2
(\tilde{U}^{z}(s)) ds\Big|}^q\\
&+K\mathbb{E}{\Big|\sum_{z\in\mathbb{X}}\int_0^t{\left(Q^{t-s; x,z}_\delta
-Q^{t-s;y,z}_\delta\right)}{b(\tilde{U}^{z}(s))} ds\Big|}^{2q}.
\end{split}
\label{spdiff1}
\end{equation}
For any fixed but
arbitrary point $(t,x,y)\in\mathbb{T}\times\mathbb{X}^2$ let $\mu_t^{x,y}$ and $\nu_t^{x,y}$ be the measures defined on $[0,t]\times\mathbb{X}$ by
$d\mu_t^{x,y}(s,z)=({(Q^{t-s; x,z}_\delta-Q^{t-s;y,z}_\delta)}^2/\delta) ds$ and $d\nu_t^{x,y}(s,z)=\left|Q^{t-s; x,z}_\delta-Q^{t-s;y,z}_\delta\right|ds$, and let
$|\mu_t^{x,y}| = \mu_t^{x,y}([0,t] \times \mathbb{X})$ and $|\nu_t^{x,y}| = \nu_t^{x,y}([0,t]\times\mathbb{X})$.    So, from \eqref{spdiff1}, Jensen's inequality,
the growth condition on $a$ and $b$, the definition of $M_q(t)$, and elementary inequalities, we have
\begin{equation}
\begin{split}
\mathbb{E}\left| \tilde{U}_R^x(t) - \tilde{U}_R^y(t)\right|^{2q}
\le& K\mathbb{E}\Big[\int_{[0,t] \times \mathbb{X}}
\left|a(\tilde{U}^{z}(s))\right|^{2q}\frac{d\mu_t^{x,y}(s,z)}{|\mu_t^{x,y}|}\Big]
{|\mu_t^{x,y}|}^q \\
&+K\mathbb{E}\Big[\int_{[0,t] \times \mathbb{X}}\left|{b(\tilde{U}^{z}(s))}
\right|^{2q}\frac{d\nu_t^{x,y}(s,z)}{|\nu_t^{x,y}|}\Big]{|\nu_t^{x,y}|}^{2q}\\
\le&  K\Big[\int_{[0,t] \times \mathbb{X}}\left (1+M_q(s)\right)
\frac{d\mu_t^{x,y}(s,z)}{|\mu_t^{x,y}|}\Big] {|\mu_t^{x,y}|}^q\\
&+K\Big[\int_{[0,t] \times \mathbb{X}}\left(1+M_q(s)\right)
\frac{d\nu_t^{x,y}(s,z)}{|\nu_t^{x,y}|}\Big]{|\nu_t^{x,y}|}^{2q}
\end{split}
\label{spdiff2}
\end{equation}
Now, using the boundedness of $M_q$ on $\mathbb{T}$ (Proposition \ref{Expbd}),
we get
 \begin{equation*}
\begin{split}
\mathbb{E}\left| \tilde{U}_R^x(t) - \tilde{U}_R^y(t)\right|^{2q}\le K\left({|\mu_t^{x,y}|}^q+{|\nu_t^{x,y}|}^{2q}\right)\le K\left(|x-y|^{q}+|x-y|^{2q}\right),
\end{split}
\label{spdiff3}
\end{equation*}
where the last inequality follows from Lemma \ref{3rdQinequality} and Lemma
\ref{5thinequality}.
\end{proof}

\begin{lem}[Temporal differences] There exists a constant $K$ depending only on $q$, $\max_x|\xi(x)|$, and
$T$ such that
\[\mathbb{E}\left| \tilde{U}_R^x(t) - \tilde{U}_R^x(r) \right|^{2q} \leq K \left(|t - r|^{q/2}+|t-r|^{2q}+|t-r|^{2q{(1-e^{-1}})}\right),\] for all $x \in \mathbb{X}$ and for all $t,r \in\mathbb{T}$.
\label{boundsontdifferences}
\end{lem}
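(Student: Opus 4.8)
The plan is to mirror the proof of Lemma \ref{boundsonsdifferences}, replacing the spatial-difference kernel $Q^{t-s;x,z}_\delta-Q^{t-s;y,z}_\delta$ with the temporal-difference kernel $Q^{t-s;x,z}_\delta-Q^{r-s;x,z}_\delta$ (with the convention $Q^{u;x,z}_\delta=0$ for $u<0$, so that for $s\in(r,t]$ only the first term survives). First I would split $\tilde U_R^x(t)-\tilde U_R^x(r)$ into its stochastic part and its drift part and apply Burkholder's inequality to the stochastic part, exactly as in \eqref{spdiff1}, to get
\[
\mathbb{E}\bigl|\tilde U_R^x(t)-\tilde U_R^x(r)\bigr|^{2q}
\le K\,\mathbb{E}\Bigl|\sum_{z}\int_0^t\tfrac{(Q^{t-s;x,z}_\delta-Q^{r-s;x,z}_\delta)^2}{\delta}a^2(\tilde U^z(s))\,ds\Bigr|^q
+K\,\mathbb{E}\Bigl|\sum_{z}\int_0^t(Q^{t-s;x,z}_\delta-Q^{r-s;x,z}_\delta)b(\tilde U^z(s))\,ds\Bigr|^{2q}.
\]
Then I would introduce the two finite measures $d\mu_t^{r,x}(s,z)=\bigl((Q^{t-s;x,z}_\delta-Q^{r-s;x,z}_\delta)^2/\delta\bigr)\,ds$ and $d\nu_t^{r,x}(s,z)=|Q^{t-s;x,z}_\delta-Q^{r-s;x,z}_\delta|\,ds$ on $[0,t]\times\mathbb{X}$, normalize them to probability measures, apply Jensen's inequality, use the linear growth bounds \eqref{acond}(b) on $a$ and $b$ together with the definition of $M_q(s)$, and then invoke the boundedness of $M_q$ on $\mathbb{T}$ from Proposition \ref{Expbd}. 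This reduces the estimate to
\[
\mathbb{E}\bigl|\tilde U_R^x(t)-\tilde U_R^x(r)\bigr|^{2q}\le K\bigl(|\mu_t^{r,x}|^{q}+|\nu_t^{r,x}|^{2q}\bigr).
\]

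The remaining task is to bound the total masses $|\mu_t^{r,x}|$ and $|\nu_t^{r,x}|$. For $|\mu_t^{r,x}|$, summing over $z$ and integrating over $s$ is precisely the quantity estimated in Lemma \ref{4thQinequality}, giving $|\mu_t^{r,x}|\le K\sqrt{t-r}$, hence $|\mu_t^{r,x}|^q\le K|t-r|^{q/2}$, which produces the first term in the asserted bound. For $|\nu_t^{r,x}|$, summing $|Q^{t-s;x,z}_\delta-Q^{r-s;x,z}_\delta|$ over $z$ and integrating is exactly the quantity controlled by Lemma \ref{7thinequality}, which gives $|\nu_t^{r,x}|\le K\bigl[(t-r)+(t-r)^{1-e^{-1}}\bigr]$; raising to the power $2q$ and using that $t-r\le T$ so that cross terms are dominated, we get $|\nu_t^{r,x}|^{2q}\le K\bigl(|t-r|^{2q}+|t-r|^{2q(1-e^{-1})}\bigr)$, which accounts for the two remaining terms. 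Collecting these yields the claimed inequality.

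I expect the only genuine subtlety to be bookkeeping around the convention $Q^{u;x,z}_\delta=0$ for $u<0$: one must be careful that for $s\in(r,t]$ the difference $Q^{t-s;x,z}_\delta-Q^{r-s;x,z}_\delta$ equals $Q^{t-s;x,z}_\delta$ alone, and that the measures $\mu_t^{r,x},\nu_t^{r,x}$ are still finite and the normalization step is legitimate; but this is handled uniformly by Lemmas \ref{4thQinequality} and \ref{7thinequality}, which are stated with exactly this convention. The only other point to watch is that the constant $K$ from Proposition \ref{Expbd} depends on $q$, $\max_x|\xi(x)|$, and $T$ but not on $n$ (all random-walk lemmas in this section hold uniformly in $n$), so the bound is uniform in $n$ as required for the subsequent tightness argument. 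No step here is an essential obstacle — the heavy lifting was already done in the random-walk estimates of Lemmas \ref{4thQinequality} and \ref{7thinequality} and in Proposition \ref{Expbd} — so the proof is a routine adaptation of the spatial-difference argument.
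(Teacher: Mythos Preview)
Your proposal is correct and follows essentially the same approach as the paper: the paper defines the same two measures $\mu_{t,r}^x$ and $\nu_{t,r}^x$ (with the same convention $Q_\delta^{u;x}=0$ for $u<0$), invokes the argument of Lemma~\ref{boundsonsdifferences} verbatim to reach $K(|\mu_{t,r}^x|^q+|\nu_{t,r}^x|^{2q})$, and then applies Lemmas~\ref{4thQinequality} and~\ref{7thinequality} exactly as you do. Your write-up simply spells out in full the steps the paper compresses into the phrase ``arguing as in Lemma~\ref{boundsonsdifferences}.''
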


\begin{proof}
Assume without loss of generality  that $r<t$.  For a fixed point $(r,t,x)$,
let $\mu_{t,r}^x$ and $\nu_{t,r}^x$ be the measures defined on
$[0,t]\times\mathbb{X}$ by
\begin{gather*}
d\mu_{t,r}^x(s,z)=({(Q^{t-s; x,z}_\delta-Q^{r-s;x-z}_\delta)}^2/\delta) ds\\
d\nu_{t,r}^x(s,z)=\left|Q^{t-s; x,z}_\delta-Q^{r-s;x-z}_\delta\right|ds,
\end{gather*}
with the convention that $Q_{\delta}^{t;x}=0$ if $t<0$, and let $|\mu_{t,r}^x| = \mu_{t,r}^x([0,t] \times \mathbb{X})$ and $|\nu_{t,r}^x| = \nu_{t,r}^x([0,t]\times\mathbb{X})$.
Then, arguing as in Lemma \ref{boundsonsdifferences}, we obtain
 \begin{equation*}
 \begin{split}
\mathbb{E}\left| \tilde{U}_R^x(t) - \tilde{U}_R^x(r)\right|^{2q}&\le K\left({|\mu_{t,r}^x|}^q+{|\nu_{t,r}^x|}^{2q}\right)\\
&\le K\left((t-r)^{q/2}+(t-r)^{2q}+(t-r)^{2q{(1-e^{-1}})}\right),
\end{split}
\end{equation*}
where the last inequality follows from Lemma \ref{4thQinequality} and Lemma \ref{7thinequality}.
\end{proof}

Following \cite{ASDDE1}, we conclude from Lemma \ref{boundsonsdifferences} and
Lemma \ref{boundsontdifferences} that the random part of
the sequence of (interpolated) continuous SDDEs solutions
${\left(\Tilde{U}_n^x(t)\right)}_{n=1}^{\infty}$ is tight on
$C(\mathbb{T}\times {\mathbb{R}};\mathbb{R})$.   This and \eqref{determ} imply
that there exists a
weakly convergent subsequence $\Tilde{U}_{n_k}$.  So, by Lemma \ref{GSDDE} and
Definition \ref{limitsolns} we have proven the existence of a weak SDDE weak
limit solution to $e^{\epsilon_1,\epsilon_2}_{\mbox{\tiny heat}} (a,b,\xi)$.
Then, following Skorokhod, we can construct processes
$Y_k \overset{d}{=} \Tilde{U}_{n_k}$ on some
probability space $(\Omega^S,\eus{F}^S, \mathbb{P}^S)$ such that with probability
$1$, as $k\to\infty$, $Y_k(t,x)$ converges to a random field $Y(t,x)$ uniformly
 on compact subsets of $\mathbb{T}\times{\mathbb{R}}$ for any $T$.

We will now show that $Y(t,x)$ is a solution to  $e^{\epsilon_1,\epsilon_2}_{\mbox{\tiny heat}} (a,b,\xi)$ in the traditional sense, by showing that it solves an equivalent
martingale problem to the test function formulation \eqref{TFF} for $e^{\epsilon_1,\epsilon_2}_{\mbox{\tiny heat}} (a,b,\xi)$ (see Theorem \ref{1stmartprob} and Theorem \ref{marttff}), and this will complete the
proof of the existence assertions in Theorem \ref{ww}.   It is worth noting that Theorem \ref{marttff} eliminates the need for a second
martingale problem (as in Theorem 5.3 in \cite{ASDDE1}), and provides a simpler way to establish the equivalence to the test
function formulation of any heat-based SPDE (not just for reaction diffusions SPDEs).

\subsection{The Martingale problem}

For every $\varphi\in C_c^{\infty}(\mathbb{R};\mathbb{R})$ let
\begin{equation}
\begin{split}
&S^\varphi(Y_k,t)\\
=&\sum_{x\in{\mathbb X}_{n_k}}\left[Y_k(t,x)-\xi(x)\right] \varphi(x)\,\delta_{n_k}-
\frac12\int_0^t\sum_{x\in{\mathbb X}_{n_k}}Y_k(s,x)\,\Delta_{n_k} \varphi(x)\,\delta_{n_k}ds\\
&-\int_0^t\sum_{x\in{\mathbb{X}}_{n_k}}b(Y_k(s,x))\varphi(x) \delta_{n_k}ds
\label{orgsum}
\end{split}
\end{equation}
and let $\eus{G}_t$ be the filtration on $(\Omega^S,\eus{F}^S, \mathbb{P}^S)$ generated by the process
$S^\varphi(Y_k,t)$ for all $\varphi$ and all $k$; i.e., $\eus{G}_t=
\sigma\left[S^\varphi(Y_k,s);0\le s\le t,
\varphi\in C_c^{\infty}(\mathbb{R};\mathbb{R}), k=1,2,\cdots\right]$.

\begin{thm} For $\varphi\in C_c^{\infty}(\mathbb{R};\mathbb{R})$ we have
\begin{enumerate}
\renewcommand{\labelenumi}{(\roman{enumi})}
\item $\{M^\varphi(t), \eus{G}_t\}$ is a martingale, for every
$\varphi\in C_c^{\infty}(\mathbb{R};\mathbb{R})$, where
$$M^\varphi(t)\overset{\triangle}{=}  (Y(t)-\xi,\varphi) - \frac12\int_0^t (Y(s),\varphi'') ds-\int_0^t(b(Y(s)),\varphi)ds;\ 0\le t<T,$$
where $(\cdot,\cdot)$ denotes the scalar product on $L^2(\mathbb{R})$,
\item ${\langle{M^\varphi(\cdot)}\rangle_t=\langle {(Y,\varphi)} \rangle_t = \int_0^t \int_{{\mathbb{R}}}a^2(Y^x(s))\varphi^2(x)dx ds}$
\end{enumerate}
\label{1stmartprob}
\end{thm}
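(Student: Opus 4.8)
The plan is to identify $S^\varphi(Y_k,t)$ as a discrete analogue of $M^\varphi$, recognize it as a martingale at the SDDE level, and then pass to the limit along the Skorokhod copies $Y_k$.

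\textbf{Step 1: The discrete martingale.} First I would show that for each $k$, the process $S^\varphi(Y_k,t)$ is a martingale with respect to a suitable filtration (the natural filtration of the underlying Brownian system transported to $(\Omega^S,\eus{F}^S,\mathbb{P}^S)$ via Skorokhod). Since $Y_k \overset{d}{=} \tilde U_{n_k}$, it suffices to verify this for $\tilde U_{n_k}$ itself: multiplying the integral equation \eqref{ISDDE} by $\varphi(x)\delta_{n_k}$, summing over $x\in\mathbb{X}_{n_k}$, and using the summation-by-parts identity $\sum_x \Delta_{n_k}\tilde U^x\,\varphi(x) = \sum_x \tilde U^x\,\Delta_{n_k}\varphi(x)$ (valid because $\varphi$ has compact support, so all boundary terms vanish), the drift and Laplacian terms in $S^\varphi$ cancel exactly against the corresponding terms in \eqref{ISDDE}, leaving
\[
S^\varphi(\tilde U_{n_k},t) = \sum_{x\in\mathbb{X}_{n_k}} \int_0^t a(\tilde U^x_{n_k}(s))\,\varphi(x)\,\delta_{n_k}^{1/2}\,dW^x_{n_k}(s),
\]
which is a sum of finitely-supported-in-$x$ stochastic integrals against independent Brownian motions, hence a continuous $L^2$-martingale, with quadratic variation $\int_0^t \sum_{x\in\mathbb{X}_{n_k}} a^2(\tilde U^x_{n_k}(s))\varphi^2(x)\delta_{n_k}\,ds$. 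The moment bounds from Proposition \ref{Expbd} together with the linear growth of $a$ in \eqref{acond} give uniform $L^{2q}$ control, so these are genuine (uniformly integrable, in fact $L^p$-bounded) martingales.

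\textbf{Step 2: Passage to the limit.} Next I would let $k\to\infty$. Since $Y_k\to Y$ a.s.\ uniformly on compacts and $\varphi\in C_c^\infty$, the Riemann-sum expression $S^\varphi(Y_k,t)$ converges pointwise in $(t,\omega)$ to $M^\varphi(t)$: the first sum converges to $(Y(t)-\xi,\varphi)$, the discrete-Laplacian term converges to $\frac12\int_0^t(Y(s),\varphi'')\,ds$ using $\Delta_{n_k}\varphi\to\varphi''$ uniformly on the support of $\varphi$, and the $b$-term converges by continuity of $b$ and dominated convergence (here the linear growth of $b$ and the uniform moment bounds provide the dominating function). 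To upgrade a.s.\ convergence to the martingale property for the limit, I would establish uniform integrability of $\{S^\varphi(Y_k,t)\}_k$ for each fixed $t$ --- this follows from a uniform $L^2$ (indeed $L^{2q}$) bound via Burkholder applied to the stochastic-integral representation in Step 1, the bound on $\int_0^t\sum_x (Q)^2$-type quantities being replaced here by the trivial bound $\sum_x\varphi^2(x)\delta_{n_k}\le \|\varphi\|_2^2 + o(1)$ together with Proposition \ref{Expbd}. Uniform integrability plus a.s.\ convergence lets me pass the conditional-expectation/martingale identity $\mathbb{E}[S^\varphi(Y_k,t)\mid\eus{G}_s^{(k)}] = S^\varphi(Y_k,s)$ to the limit, obtaining that $M^\varphi$ is a $\eus{G}_t$-martingale; one checks that the limiting filtration $\eus{G}_t$ (generated by the $S^\varphi(Y_k,\cdot)$'s) is the appropriate one with respect to which the limit identity holds.

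\textbf{Step 3: The quadratic variation.} For part (ii) I would first compute, from the representation in Step 1, that $\langle S^\varphi(Y_k,\cdot)\rangle_t = \int_0^t\sum_{x\in\mathbb{X}_{n_k}} a^2(Y_k(s,x))\varphi^2(x)\delta_{n_k}\,ds$, which converges a.s.\ to $\int_0^t\int_{\mathbb{R}} a^2(Y(s,x))\varphi^2(x)\,dx\,ds$ by the same Riemann-sum/continuity/dominated-convergence reasoning. Then, using that $(S^\varphi(Y_k,t))^2 - \langle S^\varphi(Y_k,\cdot)\rangle_t$ is a martingale and is uniformly integrable (now needing a uniform $L^4$ bound, again from Burkholder and Proposition \ref{Expbd}), I pass to the limit to conclude $(M^\varphi(t))^2 - \int_0^t\int_{\mathbb{R}}a^2(Y(s,x))\varphi^2(x)\,dx\,ds$ is a $\eus{G}_t$-martingale, i.e.\ $\langle M^\varphi\rangle_t$ equals the claimed expression; the identity $\langle M^\varphi\rangle_t = \langle(Y,\varphi)\rangle_t$ is immediate since $M^\varphi(t)$ and $(Y(t),\varphi)$ differ by the finite-variation process $-\frac12\int_0^t(Y(s),\varphi'')ds - \int_0^t(b(Y(s)),\varphi)ds$.

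\textbf{Main obstacle.} The routine parts are the summation-by-parts and the Riemann-sum limits; the real work is the uniform-integrability/limit-interchange, i.e.\ making sure the a.s.\ convergence of $Y_k$ (which is only uniform on \emph{compact} spatial sets) is compatible with the integrals over all of $\mathbb{R}$ appearing in $S^\varphi$ and its quadratic variation. This is handled by the compact support of $\varphi$, which localizes every sum and integral to a fixed bounded set, combined with the uniform-in-$x$ moment bounds of Proposition \ref{Expbd} to dominate; but one must be careful that these moment bounds were proved for the SDDE processes $\tilde U_n$ and transfer to $Y_k$ only because equality in law preserves them, and that $Y$ inherits them by Fatou. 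Once localization and domination are in place, the martingale transfer is standard.
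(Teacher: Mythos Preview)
Your proposal is correct and follows essentially the same route as the paper: identify $S^\varphi(\tilde U_{n_k},t)$ as a finite sum of Brownian stochastic integrals via summation by parts, transfer to $Y_k$ by equality in law, pass to the limit using a.s.\ convergence on compacts plus uniform integrability obtained from Burkholder and Proposition~\ref{Expbd}, and repeat the argument for the squared process to get the quadratic variation. The only cosmetic difference is that in part~(ii) the paper works with the conditional expectations $\mathbb{E}\!\left[(S^\varphi(Y_k,t)-S^\varphi(Y_k,r))^2\mid\eus{G}_r\right]$ directly (bounding $\mathbb{E}|S^\varphi(Y_k,t)-S^\varphi(Y_k,r)|^{2p}$ via a Jensen argument with the measure $d\eta_k^t(s,x)=\varphi^2(x)\delta_{n_k}ds$), whereas you phrase it through the martingale $(S^\varphi)^2-\langle S^\varphi\rangle$; these are equivalent, and your identification of the compact support of $\varphi$ together with the uniform moment bounds as the key localization/domination device is exactly what the paper uses.
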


\begin{proof}
(i)\quad  Assume that the sequence of Brownian motions $\Tilde{W}_n^x(t)$ in \eqref{SDDE}
is defined on some probability space $(\Omega,\eus{F},\mathbb{P})$ and adapted to a filtration
$\{{\eus{F}}_t\}_{t\ge0}$.  We first observe from \eqref{SDDE} that for any $k$
$$\left[\tilde{U}^x_{n_k}(t)-\xi(x)\right] \varphi(x)\delta_{n_k}- \int_0^t \left[\frac12\Delta_{n_k} \tilde{U}^x_{n_k}(s)+b(\tilde{U}^x_{n_k}(s))\right]\varphi(x) \delta_{n_k}ds$$
is an ${\eus{F}}_t$-martingale for each $x\in {\mathbb{X}}_{n_k}$ (this easily follows from the growth condition on $a$ ((b) in \eqref{acond})
and Proposition \ref{Expbd},  along with the boundedness of $\varphi$).  Now since $\varphi$ has a compact support, it follows that
\begin{equation}
\begin{split}
&\sum_{x\in{\mathbb{X}}_{n_k}}\left[\tilde{U}^x_{n_k}(t)-\xi(x)\right]\varphi(x)\delta_{n_k}
 -\frac12\int_0^t\sum_{x\in{\mathbb{X}}_{n_k}}\Delta_{n_k}\tilde{U}^x_{n_k}(s)\varphi(x) \delta_{n_k}ds\\
 &-\int_0^t\sum_{x\in{\mathbb{X}}_{n_k}} b(\tilde{U}^x_{n_k}(s))\varphi(x) \delta_{n_k}ds\\
= &\sum_{x\in{\mathbb{X}}_{n_k}}\left[\tilde{U}^x_{n_k}(t)-\xi(x)\right] \varphi(x)\delta_{n_k}
-  \frac12\int_0^t\sum_{x\in{\mathbb{X}}_{n_k}}\tilde{U}^x_{n_k}(s)\Delta_{n_k} \varphi(x)\delta_{n_k}ds\\
&\quad-\int_0^t\sum_{x\in{\mathbb{X}}_{n_k}}b(\tilde{U}^x_{n_k}(s))\varphi(x) \delta_{n_k}ds\overset{\triangle}{=}  S^\varphi(\tilde{U}_{n_k},t)
\end{split}
\label{sumofsubseq}
\end{equation}
is a finite sum, and hence an ${\eus{F}}_t$-martingale.   Replacing the $\tilde{U}^x_{n_k}(t)$ in \eqref{sumofsubseq} by the $Y_k(t,x)$,
and letting $k\to\infty$,
we get that $S^\varphi(Y_k,t)\to M^\varphi(t)$  a.s. (uniformly on $\mathbb{T}$).  In
addition,  $S^\varphi(Y_k,t)$ are
uniformly integrable for each $t$ and each $\varphi$.  To see this, observe that
for each $t\in\mathbb{T}$ and each
$\varphi\in C_c^{\infty}(\mathbb{R};\mathbb{R})$, we have
\begin{equation*}
\begin{split}
\mathbb{E}|S^\varphi(Y_k,t)|^2&=\mathbb{E}|S^\varphi(\tilde{U}_{n_k},t)| ^2\\
&=\mathbb{E}\Big|\int_0^t\sum_{x\in{\mathbb{X}}_{n_k}}a(\tilde{U}^x_{n_k}(s))
\varphi(x)\sqrt{\delta_{n_k}}dW^x_{n_k}(s)\Big|^2\\
&\leq K \int_0^t\sum_{x\in{\mathbb{X}}_{n_k}}\mathbb{E} a^2(\tilde{U}^x_{n_k}(s))\varphi^2(x){\delta_{n_k}}ds\le K <\infty,
\end{split}
\end{equation*}
for some constant $K>0$ independent of $k$, where the last two inequalities follow from Burkholder's inequality, the boundedness and compact supportedness
of $\varphi$,  the growth condition on $a$ ((b) in \eqref{acond}), and Proposition \ref{Expbd}.  Thus, uniform integrability
of the sequence $\{S^\varphi(Y_k,t)\}_k$ follows for each $\varphi$ and each $t$.   So, If $s < t$
\begin{equation*}
\begin{split}
&\mathbb{E}\left[ M^\varphi(t) - M^\varphi(s)\left| \eus{G}_s\right.\right]
 = \lim_{k\to\infty}\mathbb{E}\left[S^\varphi(Y_k,t) - S^\varphi(Y_k,s)\left| \eus{G}_s\right.\right] = 0.
\end{split}
\end{equation*}
This proves (i).

\noindent (ii)\quad From \eqref{SDDE} it follows that
\begin{equation*}
\begin{split}
& d \Big[\sum_{x\in {\mathbb{X}}_{n_k}}\tilde{U}^x_{n_k}(t)\varphi(x)\delta_{n_k}
\Big]\\
&= \sum_{x\in {\mathbb{X}}_{n_k}} a\left(\tilde{U}^x_{n_k}(t)\right)\varphi(x) \sqrt{\delta_{n_k}}dW^x_{n_k}(t) \\
&\quad+\Big[\sum_{x\in {\mathbb{X}}_{n_k}}\big(\frac12 \Delta_{n_k}\tilde{U}^x_{n_k}
(t)+b(\tilde{U}^x_{n_k}(t))\big)\varphi(x)\delta_{n_k}\Big]dt\\
&= \Big[\frac12\sum_{x\in {\mathbb{X}}_{n_k}}\tilde{U}^x_{n_k}(t)
\Delta_{n_k}\varphi(x)\delta_{n_k}
+ \sum_{x\in {\mathbb{X}}_{n_k}}b(\tilde{U}^x_{n_k}(t))\varphi(x)\delta_{n_k}\Big]dt\\
&+\sum_{x\in {\mathbb{X}}_{n_k}}  a\left(\tilde{U}^x_{n_k}(t)\right)\varphi(x) \sqrt{\delta_{n_k}}dW^x_{n_k}(t).
\end{split}
\end{equation*}
Observing that the first two terms on the right hand side of the last equality in the above equation are of bounded variation, and that the
$\left(W^x_{n_k}(t)\right)_{x\in {\mathbb{X}}_{n_k}}$ is a sequence of independent Brownian motions, we obtain, after inspecting \eqref{sumofsubseq}, that
\begin{equation}
\Big\langle S^\varphi(\tilde{U}_{n_k},\cdot)\Big\rangle_t
=\Big\langle{\sum_{x\in {\mathbb{X}}_{n_k}}\tilde{U}^x_{n_k}(\cdot)
\varphi(x)\delta_{n_k}}\Big\rangle_t
= \int_0^t \Big[\sum_{x\in {\mathbb{X}}_{n_k}}a^2\left(\tilde{U}^x_{n_k}(s)\right)
\varphi^2(x)\delta_{n_k}\Big]ds
\label{QVoftheutilde}
\end{equation}
Again, replacing the $\tilde{U}^x_{n_k}(t)$ in \eqref{QVoftheutilde} by the $Y_k(t,x)$,
we get, for $0\le r\le t\le T$,
\begin{equation}
\mathbb{E}\left[\left(S^\varphi(Y_k,t)-S^\varphi(Y_k,r)\right)^2
\big|\eus{G}_r\right]
=\mathbb{E}\Big[\int_r^t\sum_{x\in {\mathbb{X}}_{n_k}}a^2\left(Y_k(s,x)\right)
\varphi^2(x)\delta_{n_k}ds\Big|\eus{G}_r\Big].
\label{expl}
\end{equation}
Again, we observe that $\left(S^\varphi(Y_k,t)-S^\varphi(Y_k,r)\right)^2$ are uniformly integrable, for each $r$ and $t$  and each $\varphi$.  To see that,
fix $p\ge1$, $0\le r\le t\le T$, and
$\varphi\in C_c^{\infty}(\mathbb{R};\mathbb{R})$; and apply Burkholder's
inequality to obtain
\begin{equation}
\begin{split}
 \mathbb{E}\left|S^\varphi(Y_k,t)-S^\varphi(Y_k,r)\right|^{2p}
 &= \mathbb{E}\Big|\int_r^t\sum_{x\in {\mathbb{X}}_{n_k}}a\left(Y_k(s,x)\right)
 \varphi(x)\sqrt{\delta_{n_k}}dW^x_{n_k}(s)\Big|^{2p}\\
 &\le K\mathbb{E} \Big|\sum_{x\in {\mathbb{X}}_{n_k}}\int_0^ta^2\left(Y_k(s,x)
 \right)\varphi^2(x)\delta_{n_k}ds\Big|^{p}
\end{split}
\label{ui2}
\end{equation}
for some constant $K>0$ independent of $k$.  Now, let ${\eta}_{_k}^t$ be the measure defined
on $[0,t]\times{\mathbb{X}}_{n_k}$ by $d{\eta}_{_k}^t(s,x)=\varphi^2(x)\delta_{n_k}ds$ and let
$|{\eta}_{_k}^t| = {\eta}_{_k}^t([0,t] \times{\mathbb{X}}_{n_k})$.  Clearly, for a fixed $\varphi$,
$$
\sup_{\substack{k\in\mathbb{N}\\0\le t\le T}}|{\eta}_{_k}^t|\le K
$$
for some constant $K>0$ independent of $k$
($K$ depends only on $T$, $\sup_x\varphi^2(x)$, and the Lebesgue measure of the support of $\varphi$).
Then, rewriting \eqref{ui2} and---observing that ${\eta}_{_k}^t/|{\eta}_{_k}^t|$
is a probability measure---applying Jensen's inequality yields
\begin{equation}
\begin{split}
&\mathbb{E}\left|S^\varphi(Y_k,t)-S^\varphi(Y_k,r)\right|^{2p}\\
&\le K\mathbb{E}
\Big|\int_{[0,t]\times{\mathbb{X}}_{n_k}} a^2\left(Y_k(s,x)\right)
\frac{d{\eta}_{_k}^t(s,x)}{|{\eta}_{_k}^t|}\Big|^{p}|{\eta}_{_k}^t|^p
\\ &\le {|{\eta}_{_k}^t|}^{p-1}K\int_{[0,t]\times{\mathbb{X}}_{n_k}}\mathbb{E}\left[a^{2p}\left(Y_k(s,x)\right)\right]d{\eta}_{_k}^t(s,x)\\
 &\le K\sum_{x\in {\mathbb{X}}_{n_k}}\int_0^T\mathbb{E}\left[a^{2p}\left(Y_k(s,x)\right)\right]\varphi^{2}(x){\delta_{n_k}}ds
 \le K <\infty,
\end{split}
\label{ui70}
\end{equation}
for some constant $K>0$ independent of $k$, where in the next to last inequality we also used the growth condition on $a$ ((b) in \eqref{acond})
and Proposition \ref{Expbd} ($Y_k \overset{d}{=} \Tilde{U}_{n_k}$),
along with the compact supportedness and boundedness of $\varphi$.   Thus,
\begin{equation}
\lim_{k\to\infty}\mathbb{E}\left[\left.\left(S^\varphi(Y_k,t)-S^\varphi(Y_k,r)\right)^2\right|\eus{G}_r\right]\\
=\mathbb{E}\left[\left.\left(M^\varphi(t)-M^\varphi(r)\right)^2\right|\eus{G}_r\right].
\label{expl1}
\end{equation}
Also, for the same reasons as in the next to last inequality in \eqref{ui70}, we see that
$$
\mathbb{E}\int_r^t\sum_{x\in {\mathbb{X}}_{n_k}}a^2\,\left(Y_k(s,x)\right)
\varphi^2(x)\delta_{n_k}ds\le K<\infty,
$$
for some constant $K>0$ independent of $k$.  Therefore, for each $r,t$ and
each $\varphi$,
$$
\Big\{\int_r^t\sum_{x\in {\mathbb{X}}_{n_k}}a^2\,\left(Y_k(s,x)\right)
\varphi^2(x)\delta_{n_k}ds\Big\}_k
$$
is a uniformly integrable sequence and thus
\begin{equation}
\begin{split}
&\lim_{k\to\infty}\mathbb{E}\Big[\int_r^t\sum_{x\in {\mathbb{X}}_{n_k}}a^2\,
\left(Y_k(s,x)\right)\varphi^2(x)\delta_{n_k}ds\Big|\eus{G}_r\Big]\\
&=\mathbb{E}\Big[\int_r^t\int_{\mathbb{R}}a^2\left(Y^x(s)\right)\varphi^2(x)dx ds
\Big|\eus{G}_r\Big].
\end{split}
\label{expl2}
\end{equation}
Now, equations  \eqref{expl}, \eqref{expl1}, and \eqref{expl2} yield
\begin{equation}
\left\langle{M^\varphi(\cdot)}\right\rangle_t = \int_0^t \int_{{\mathbb{R}}}a^2(Y^x(s))\varphi^2(x)dx ds,
\label{QVoftheY}
\end{equation}
and (ii) is proved.
\end{proof}

\subsection{Regularity and Uniqueness}

Having established existence for the SPDE
$e^{\epsilon_1,\epsilon_2}_{\mbox{\tiny heat}} (a,b,\xi)$ under our conditions
\eqref{acond}, we turn to the proof of some properties of our solution $Y$.

\begin{proof}[Proof of  the regularity part of Theorem \ref{ww}]
We divide the proof in two steps:

\noindent (1) $Y$ is $L^p$ bounded for all $p\ge2$:\quad
First, note that $Y_k\overset{d}{=}\tilde{U}_{n_k} $ and Proposition \ref{Expbd} give us,  for each
$q\ge1$:
 \begin{equation}
 \mathbb{E}\left|Y_k(t,x)\right|^{2q}
 =\mathbb{E}\left|\tilde{U}^x_{n_k}(t)\right|^{2q}
 \le K\exp{(KT)}<\infty;\quad  \forall (k,t,x)\in\mathbb{N}\times\mathbb{T}\times\mathbb{R},
 \label{ui7}
 \end{equation}
for some constant $K$  (independent of $k,t,x$).  It follows that, for each $(t,x)\in\mathbb{T}\times\mathbb{R}$ the sequence $\{|Y_k(t,x)|^p\}_k$ is uniformly
integrable for each $p\ge2$.  Thus,
\begin{equation}
\mathbb{E}|Y(t,x)|^{p}\le\lim_{k\to\infty}\mathbb{E}\left|Y_k(t,x)\right|^{p}
\le K<\infty;\ \forall (t,x)\in\mathbb{T}\times\mathbb{R},\ \forall p\ge2,
 \label{Lp}
\end{equation}
and the desired conclusion follows.

\noindent (2) The continuous paths of $Y$ are H\"older $\gamma_s\in(0,\frac12)$
in space and H\"older $\gamma_t\in(0,\frac14)$ in time:\quad
Using Proposition \ref{Expbd}, we get, for each $q\ge1$, that
\begin{equation}
\begin{split}
&\mathbb{E}\left|Y_k(t,x)-Y_k(t,y)\right|^{2q}+\mathbb{E}\left|Y_k(t,x)-Y_k(r,x)\right|^{2q}\\&\le K\left(\mathbb{E}|Y_k(t,x)|^{2q}+\mathbb{E}|Y_k(t,y)|^{2q}+\mathbb{E}|Y_k(r,x)|^{2q}\right)\\
&\le K;\quad \forall (k,r,t,x,y)\in\mathbb{N}\times\mathbb{T}^2\times\mathbb{R}^2.
\end{split}
\label{ui14}
\end{equation}
So, for each $(r,t,x,y)\in\mathbb{T}^2\times\mathbb{R}^2$, the sequences
$\big\{\left|Y_k(t,x)-Y_k(t,y)\right|^{2q}\big\}_k$  and
$\big\{\left|Y_k(t,x)-Y_k(r,x)\right|^{2q}\big\}_k$ are uniformly integrable,
for each $q\ge1$.  Therefore, using Lemma \ref{boundsonsdifferences}
and Lemma \ref{boundsontdifferences}, we obtain
 \begin{equation}
\begin{split}
(i)\quad&\mathbb{E}\left|Y(t,x)-Y(t,y)\right|^{2q}\\
&=\lim_{k\to\infty}\mathbb{E}\left|Y_k(t,x)-Y_k(t,y)\right|^{2q}\\
&=\lim_{k\to\infty}\mathbb{E}
\left|\tilde{U}_{n_k}^x(t)-\tilde{U}_{n_k}^y(t)\right|^{2q}
\le K|x-y|^q; \quad \mbox{whenever } |x-y|<1,\\
(ii)\quad&\mathbb{E}\left|Y(t,x)-Y(r,x)\right|^{2q}\\
&=\lim_{k\to\infty}\mathbb{E}\left|Y_k(t,x)-Y_k(r,x)\right|^{2q}\\
&=\lim_{k\to\infty}\mathbb{E}\left|\tilde{U}_{n_k}^x(t)-\tilde{U}_{n_k}^x(r)
\right|^{2q} \le K|t-r|^{q/2}; \quad\mbox{whenever } |t-r|<1.
\end{split}
\label{timespace}
\end{equation}
Now, letting $q_n=n+1$ for $n\in\{0,1,\ldots\}$ and let $n=m+1$ for
$m\in\{0,1,\ldots\}$, we then have from  \eqref{timespace} that
 \begin{equation}
\begin{split}
(i)\quad  &\mathbb{E}\left|Y(t,x)-Y(t,y)\right|^{2+2n}
\le K|x-y|^{1+n};\quad\mbox{whenever } |x-y|<1,\\
(ii)\quad &\mathbb{E}\left|Y(t,x)-Y(r,x)\right|^{4+2m}
\le K|t-r|^{1+\frac{m}{2}}; \quad\mbox{whenever } |t-r|<1.
\end{split}
\label{timespace2}
\end{equation}
By Theorem 2.8 p.~53 \cite{KS} we get that  $\gamma_s\in(0,\frac{n}{2n+2}) $ and
$\gamma_t\in(0,\frac{m/2}{2m+4})$ $ \forall m,n$, from which the
proof follows upon taking the limits as $m,n\to\infty$.
\end{proof}

\begin{proof}[Proof of the uniqueness part of Theorem \ref{ww}]
Consider $e^{\epsilon_1,\epsilon_2}_{\mbox{\tiny heat}} (a,b,\xi)$
on the rectangle $\eus{R}_{T,L}\overset{\triangle}{=} [0,T]\times[0,L]$
for some $T,L>0$, and assume that $a(u)$
and $b(u)$ are as given in Theorem \ref{ww}.
Then as in the proof of Theorem 1.2 in \cite{ACR1} (see also the comment after Remark 1.1 in \cite{ACR2}), we only need to show that, if $\lambda$ is
Lebesgue measure on $\eus{R}_{T,L}$, then the ratios
$b(U)/a(U)$ and $b(V)/a(V)$ are in $L^2(\eus{R}_{T,L},\lambda)$
almost surely whenever $U$ solves
$e^{\epsilon_1,\epsilon_2}_{\mbox{\tiny heat}}(a,0,\xi)$ and $V$ solves
$e^{\epsilon_1,\epsilon_2}_{\mbox{\tiny heat}}(a,b,\xi)$.    But this easily
follows as in the proof of Theorem 1.2 \cite{ACR1} under our conditions, since we always assume that solutions to
$e^{\epsilon_1,\epsilon_2}_{\mbox{\tiny heat}} (a,b,\xi)$, and hence $U$ and $V$ are continuous.
\end{proof}

\section{The vanishing of the Laplacian vs.~noise}

We now prove of Theorem \ref{thm2}, which asserts that
${\epsilon_2}/{{ \epsilon_1}^{1/4}}$ is the correct scaling of
$\epsilon_1$ and $\epsilon_2$ when we
investigate the asymptotic behavior as $\epsilon_1,\epsilon_2\to0$.

\begin{proof}[Proof of Theorem \ref{thm2}]
Throughout this proof we use the process $Y$ of Theorem \ref{ww},
Remark \ref{fixedepsilon}, and
Remark \ref{rm2} to get to the desired conclusions.
This is justified by the fact that $Y$ has the same law as $U$.

\noindent (i)\quad
We prove it by contradiction.  So, assume there is a $T>0$ such that
$$
\lim_{\substack{\epsilon_1,\epsilon_2\downarrow0\\
{\epsilon_2}/{{ \epsilon_1}^{1/4}}\to\infty}}
\sup_{0\le s\le T}\sup_{{x\in\mathbb{R}}}\mathbb{E} Y_{\epsilon_1,\epsilon_2}^{2}(s,x)
<\infty
$$
and assume without loss of generality that $\xi\equiv0$.  Observe that
\begin{equation}
\begin{split}
&\mathbb{E}\left|Y_{\epsilon_1,\epsilon_2}(t,x)\right|^2\\
&=\mathbb{E} \Big| \int_{\mathbb{R}}\int_0^t G_{\epsilon_1}(s,t;x,y)\left[{\epsilon_2a(Y_{\epsilon_1,\epsilon_2}(s,y))} \eus{W}(ds,dy)+
b(Y_{\epsilon_1,\epsilon_2}(s,y))ds\,dy\right]\Big|^{2}\\
&= \epsilon_2^2\int_{\mathbb{R}}\int_0^t G_{\epsilon_1}^2(s,t;x,y)
{\mathbb{E} a^2(Y_{\epsilon_1,\epsilon_2}(s,y))} ds\,dy \\
&\quad + \mathbb{E}\Big(\int_{\mathbb{R}}\int_0^t G_{\epsilon_1}(s,t;x,y)
b(Y_{\epsilon_1,\epsilon_2}(s,y))dsdy\Big)^2\\
&\quad + 2\epsilon_2\mathbb{E}\Big(\int_{\mathbb{R}}\int_0^t G_{\epsilon_1}
(s,t;x,y)a(Y_{\epsilon_1,\epsilon_2}(s,y)) \eus{W}(ds,dy)\\
&\quad \times\int_{\mathbb{R}}\int_0^t G_{\epsilon_1}(s,t;x,y)
b(Y_{\epsilon_1,\epsilon_2}(s,y))dsdy\Big)\\
&\ge K^2_{l}\epsilon_2^2\int_{\mathbb{R}}\int_0^t G_{\epsilon_1}^2(s,t;x,y) ds\,dy
+\mathbb{E}\Big(\int_{\mathbb{R}}\int_0^t G_{\epsilon_1}(s,t;x,y)
b(Y_{\epsilon_1,\epsilon_2}(s,y))dsdy\Big)^2\\
&\quad  2\epsilon_2\mathbb{E}\Big(\int_{\mathbb{R}}\int_0^t G_{\epsilon_1}
(s,t;x,y)a(Y_{\epsilon_1,\epsilon_2}(s,y)) \eus{W}(ds,dy)\\
&\quad\times\int_{\mathbb{R}}\int_0^t G_{\epsilon_1}(s,t;x,y)
b(Y_{\epsilon_1,\epsilon_2}(s,y))dsdy\Big),
\end{split}
\label{as00}
\end{equation}
where we used the assumption $0<K_{l}\le a(u)$ to get the last inequality in \eqref{as00}.
Now, denoting by $P_L$ the product inside the expectation in the last term in \eqref{as00}, applying Cauchy-Schwarz, using the assumption
that $a(u)\le K_{L}$, the fact
\begin{equation}
 \int_{\mathbb{R}}\int_0^t {G_{\epsilon_1}^2(s,t;x,y)}dsdy={\frac {\sqrt { t }}{\sqrt {\pi \epsilon_1}}},
\label{L2green}
\end{equation}
and letting $C_T= 2K_L T^{1/4}/{\pi}^{1/4}$, we get that
\begin{equation}
\begin{split}
\left|2\epsilon_2\mathbb{E} P_L\right|
&\le2\epsilon_2\sqrt{\int_{\mathbb{R}}\int_0^t G_{\epsilon_1}^2(s,t;x,y)\mathbb{E}
a^2(Y_{\epsilon_1,\epsilon_2}(s,y)) dsdy}\\
&\quad \times\sqrt{\mathbb{E}\Big(\int_{\mathbb{R}}\int_0^t {G_{\epsilon_1}
(s,t;x,y))}b(Y_{\epsilon_1,\epsilon_2}(s,y))dsdy\Big)^2}\\
&\le C_T{\frac {\epsilon_2}{{ \epsilon_1}^{1/4}}}
\Big[\mathbb{E}\Big(\int_{\mathbb{R}}\int_0^t {G_{\epsilon_1}(s,t;x,y))}
b(Y_{\epsilon_1,\epsilon_2}(s,y))dsdy\Big)^2\Big]^{1/2}
\end{split} \label{CS}
\end{equation}
Now, for a fixed point
$(t,x,\epsilon_1)\in\mathbb{T}\times\mathbb{R}\times\mathbb{R}_+$ let
$\nu^{t,x}_{\epsilon_1}$ be the measure on $[0,t] \times\mathbb{R}$ defined by
$d\nu^{t,x}_{\epsilon_1}(s,y)=G_{\epsilon_1}(s,t;x,y)ds dy$, and let
$|\nu^{t,x}_{\epsilon_1}| = \nu^{t,x}_{\epsilon_1}([0,t] \times \mathbb{R})$.
Then, observing that
\begin{equation}
\left|\nu^{t,x}_{\epsilon_1}\right|
=\Big(\int_{\mathbb{R}}\int_0^t {G_{\epsilon_1}(s,t;x,y)}dsdy\Big)=t,
\label{Lgreen}
\end{equation}
and that $\nu^{t,x}_{\epsilon_1}/|\nu^{t,x}_{\epsilon_1}|$ is a
probability measure, we apply Jensen's inequality and the growth condition on
$b$ to \eqref{CS} to get
 \begin{equation}
\begin{split}
\left|2\epsilon_2\mathbb{E} P_L\right|
&\le C_T{\frac {\epsilon_2}{{ \epsilon_1}^{1/4}}}
\Big(\int_{[0,t] \times \mathbb{R}}\mathbb{E} b^2(Y_{\epsilon_1,\epsilon_2}
(s,y))\frac{d\nu^{t,x}_{\epsilon_1}(s,y)}
{\left|\nu^{t,x}_{\epsilon_1}\right|}\Big)^{1/2}
{\left|\nu^{t,x}_{\epsilon_1}\right|}\\
&\le \tilde{C}_T{\frac {\epsilon_2}{{ \epsilon_1}^{1/4}}}
\Big(\int_{[0,t] \times \mathbb{R}}\mathbb{E} b^2(Y_{\epsilon_1,\epsilon_2}(s,y))\frac{d\nu^{t,x}_{\epsilon_1}(s,y)}
{\left|\nu^{t,x}_{\epsilon_1}\right|}\Big)^{1/2}\\
&\le\tilde{C}_T{\frac {\epsilon_2}{{ \epsilon_1}^{1/4}}}\sup_{0\le s\le T}
\sup_{{x\in\mathbb{R}}}\left(\mathbb{E} b^2(Y_{\epsilon_1,\epsilon_2}(s,x))\right)^{1/2}
\\
&\le K_T{\frac {\epsilon_2}{{ \epsilon_1}^{1/4}}}\sup_{0\le s\le T}
\sup_{{x\in\mathbb{R} }}\left(1+\mathbb{E} Y_{\epsilon_1,\epsilon_2}^{2}(s,x)\right)^{1/2}
\\
&\le K_T{\frac {\epsilon_2}{{ \epsilon_1}^{1/4}}}
\Big(1+\sup_{0\le s\le T}\sup_{{x\in\mathbb{R}}}\left[\mathbb{E} Y_{\epsilon_1,
\epsilon_2}^{2}(s,x)\right]^{1/2}\Big).
\end{split} \label{Jens}
\end{equation}
Equations \eqref{Jens}, \eqref{L2green}, and \eqref{as00} then yield
\begin{equation}
\mathbb{E}\left|Y_{\epsilon_1,\epsilon_2}(t,x)\right|^2-K^2_{l}
\sqrt{\frac{ t }{\pi}}\frac{\epsilon_2^2}{\sqrt{\epsilon_1}}
+K_T{\frac {\epsilon_2}{{ \epsilon_1}^{1/4}}}\Big(1+\Big[\sup_{0\le s\le T}
\sup_{{x\in\mathbb{R}}}\mathbb{E}
Y_{\epsilon_1,\epsilon_2}^{2}(s,x)\Big]^{1/2}\Big)\ge 0.
\label{contr}
\end{equation}
Taking the limit as $\epsilon_1,\epsilon_2\searrow0$ in \eqref{contr} such that ${\epsilon_2}/{{ \epsilon_1}^{1/4}}\to\infty$
and using the finiteness assumption
on $\sup_{0\le s\le T}\sup_{{x\in\mathbb{R}}}\mathbb{E} Y_{\epsilon_1,\epsilon_2}^{2}(s,x)$ (and hence the finiteness of $\mathbb{E}\left|Y_{\epsilon_1,\epsilon_2}(t,x)\right|^2$),
we obtain the desired
contradiction (since the negative term is of order ${\epsilon_2^2}/{\sqrt{\epsilon_1}}$).     The fact that $Y_{\epsilon_1,\epsilon_2}$ has the same law as our
SDDEs limit solution $U_{\epsilon_1,\epsilon_2}$ completes the proof.  The proof for the case or $\epsilon_1,\epsilon_2\nearrow\infty$
follows exactly the same steps.

\noindent(ii)\quad
  The difference between our SPDE and its deterministic counterpart, whose solution we denote by $U_{\epsilon_1}$, is given by
\begin{equation}
\begin{split}
Y_{\epsilon_1,\epsilon_2}(t,x)-U_{\epsilon_1}(t,x)
&=\int_{\mathbb{R}}\int_0^t G_{\epsilon_1}(s,t;x,y){\epsilon_2
a(Y_{\epsilon_1,\epsilon_2}(s,y))} \eus{W}(ds,dy)\\
&\quad+ \int_{\mathbb{R}}\int_0^t G_{\epsilon_1}(s,t;x,y)\Big\lbrack
b(Y_{\epsilon_1,\epsilon_2}(s,y))-b(U_{\epsilon_1})\Big\rbrack ds\,dy.
\end{split}
\label{sandddiff}
\end{equation}
Let $\nu^{t,x}_{\epsilon_1}$ be the measure on $[0,t] \times\mathbb{R}$
defined as in part (i) above, and let $d\mu^{t,x}_{\epsilon_1}(s,y)
=G^2_{\epsilon_1}(s,t;x,y)ds\,dy$  and let
$|\mu^{t,x}_{\epsilon_1}| = \mu^{t,x}_{\epsilon_1}([0,t] \times \mathbb{R})$;
then, using Burkholder and Jensen's inequalities;
and finally using the boundedness on $a$ and the Lipschitz continuity
assumption on $b$ and the simple fact that
$\int_{\mathbb{R}}G_{\epsilon_1}(s,t;x,y)dy=1$,
we  get for $0\le t\le T$ that
\begin{equation}
\begin{split}
&\mathbb{E}\left|Y_{\epsilon_1,\epsilon_2}(t,x)-U_{\epsilon_1}(t,x)\right|^{2q}\\
&\le K\Big\{\mathbb{E}\Big|\int_{\mathbb{R}}\int_0^t G_{\epsilon_1}(s,t;x,y)
{\epsilon_2a(Y_{\epsilon_1,\epsilon_2}(s,y))} \eus{W}(ds,dy)\Big|^{2q}\\
&\quad+ \mathbb{E}\Big|\int_{\mathbb{R}}\int_0^t G_{\epsilon_1}(s,t;x,y)
\left[b(Y_{\epsilon_1,\epsilon_2}(s,y))-b(U_{\epsilon_1}(s,y))\right]ds,dy
\Big|^{2q}\Big\}\\
&\le K\Big\{\epsilon_2^{2q}\int_{\mathbb{R}}\int_0^t\mathbb{E}
a^{2q}(Y_{\epsilon_1,\epsilon_2}(s,y))
\frac{d\mu^{t,x}_{\epsilon_1}(s,y)}{|\mu^{t,x}_{\epsilon_1}|}|
\mu^{t,x}_{\epsilon_1}|^{q}\Big\}\\
&\quad +K\Big\{\int_{\mathbb{R}}\int_0^t\mathbb{E}
\left|b(Y_{\epsilon_1,\epsilon_2}(s,y))-b(U_{\epsilon_1})\right|^{2q}
\frac{d\nu^{t,x}_{\epsilon_1}(s,y)}{|\nu^{t,x}_{\epsilon_1}|}|
\nu^{t,x}_{\epsilon_1}|^{2q}\Big\} \\
&\le Kt^{2q-1}\Big\{\int_{\mathbb{R}}\int_0^t\mathbb{E} \left|Y_{\epsilon_1,\epsilon_2}
(s,y)-U_{\epsilon_1}(s,y))\right|^{2q}d\nu^{t,x}_{\epsilon_1}(s,y)\Big\}
+\frac{Kt^{q/2}\epsilon_2^{2q}}{(\pi \epsilon_1)^{q/2}}\\
&\le KT^{2q-1}\Big\{\int_0^t\sup_{ x\in\mathbb{R}}\mathbb{E}
\left|Y_{\epsilon_1,\epsilon_2}(s,x)-U_{\epsilon_1}(s,x))\right|^{2q}ds\Big\}
+\frac{KT^{q/2}\epsilon_2^{2q}}{(\pi \epsilon_1)^{q/2}}
\end{split}
\label{momineq1}
\end{equation}
Letting $C_T=K(1\vee T^{2q-1})$ we get, upon applying Gronwall's Lemma,
$$
\sup_{ 0\le t\le T}\sup_{x\in\mathbb{R} }\mathbb{E}\left|Y_{\epsilon_1,\epsilon_2}
(t,x)-U_{\epsilon_1}(t,x)\right|^{2q}
\le \frac{C_T\epsilon_2^{2q}}{(\pi \epsilon_1)^{q/2}}e^{C_TT}\to 0
$$
as $\epsilon_1,\epsilon_2$, and
$\epsilon_2/\epsilon_1^{1/4}$ approach $0$.  The conclusion follows from the fact that $Y_{\epsilon_1,\epsilon_2}$
has the same law as $U_{\epsilon_1,\epsilon_2}$.
Denoting the solution to the deterministic counterpart of our approximating (discrete space) SDDE by $\tilde{U}_{n,\epsilon_1}$, replacing
the integral over space ($\mathbb{R}$) with sum over the lattice ($\mathbb{X}_n$) in the above argument, replacing the Green function $G_{\epsilon_1}$ above
with the random walk density $Q_{\delta_n,\epsilon_1}$, and following the same steps as above we get
$$
\sup_{ 0\le t\le T}\sup_{x\in\mathbb{X}_n }\mathbb{E}\left|\tilde{U}^x_{n,\epsilon_1,
\epsilon_2}(t)-\tilde{U}_{n,\epsilon_1}^x(t)\right|^{2q}\to 0\quad
\mbox{as }\epsilon_1,\epsilon_2, \epsilon_2/\epsilon_1^{1/4}\to 0.
$$
\end{proof}

\begin{rem} \rm
In contrast to part (ii) of the above proof, the argument in part (i)
doesn't work for the approximating SDDEs, this becomes clear upon comparing
Lemmas \ref{2ndQinequality} and \eqref{L2green}.
\label{SDDEno}
\end{rem}

\subsection*{Acknowledgement}
I would like to thank Robin Pemantle for pointing out to me the elegant
coupling proof of Lemma \ref{5thinequality}.
I would also like to thank the referee for his positive and thorough comments.

\section{Appendix}
We now show that the (local)-martingale problem in Theorem \ref{1stmartprob}
is equivalent to the test function
formulation of $e^{\epsilon_1,\epsilon_2}_{\mbox{\tiny heat}} (a,b,\xi)$.
To simplify notations, we assume $\epsilon_1=\epsilon_2=1$
(the case of general parameters is proven in
the same way with only obvious notational differences).
This equivalence holds as well for the $\mathbb{R}^d$, $d>1$, case;
and we will prove it in this generality.

\begin{thm}
Assume that $a,b,$ and $\xi$ satisfy the conditions in $\eqref{acond}$.  Then,
the (local) martingale problem in Theorem \ref{1stmartprob} is equivalent
to the test function formulation of
$e^{\epsilon_1,\epsilon_2}_{\mbox{\tiny heat}} (a,b,\xi)$.
\label{marttff}
\end{thm}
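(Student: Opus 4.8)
The plan is to establish the equivalence in both directions, with essentially all the work in the implication \emph{martingale problem $\Rightarrow$ test function formulation}; as stated I take $\epsilon_1=\epsilon_2=1$, and since one-dimensionality is never used I carry the argument out verbatim on $\mathbb{R}^d$. The reverse implication is immediate: if $Y$ satisfies \eqref{TFF}, i.e.\ $M^\varphi(t)=\int_0^t\int_{\mathbb{R}^d}a(Y(s,x))\varphi(x)\,\eus{W}(dx,ds)$, then $M^\varphi$ is a continuous local martingale for the natural filtration and, by Walsh's It\^o isometry for white-noise integrals \cite{WA}, $\langle M^\varphi\rangle_t=\int_0^t\int_{\mathbb{R}^d}a^2(Y(s,x))\varphi^2(x)\,dx\,ds$; under \eqref{acond} together with Proposition \ref{Expbd} (equivalently \eqref{Lp}) this is integrable, so $M^\varphi$ is a true martingale, which is Theorem \ref{1stmartprob}(i)--(ii).

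For the main direction, assume the (local) martingale problem holds; by Proposition \ref{Expbd} the $M^\varphi$ are again genuine martingales, so ``local'' is harmless here. First, polarizing \eqref{QVoftheY} (applied to $\varphi$, $\psi$, and $\varphi+\psi$, all in $C_c^\infty$, using linearity of $\varphi\mapsto M^\varphi$) yields the cross-variations
\[
\langle M^\varphi,M^\psi\rangle_t=\int_0^t\int_{\mathbb{R}^d}a^2(Y(s,x))\varphi(x)\psi(x)\,dx\,ds .
\]
Next I would promote the family $\{M^\varphi\}_{\varphi\in C_c^\infty}$ to a martingale measure. For bounded Borel $A$ pick $\varphi_n\in C_c^\infty$ with $\varphi_n\to\mathbf{1}_A$ in $L^2(\mathbb{R}^d)$; the cross-variation formula and the uniform bound $\sup_{s\le T,\,x}\mathbb{E}\,a^2(Y(s,x))\le K$ coming from Proposition \ref{Expbd} give $\mathbb{E}|M^{\varphi_n}(t)-M^{\varphi_m}(t)|^2\le K\,t\,\|\varphi_n-\varphi_m\|_{L^2}^2$, so $M_t(A):=\lim_n M^{\varphi_n}(t)$ exists in $L^2(\mathbb{P})$, uniformly in $t\le T$, and the usual arguments (Walsh \cite{WA}, Ch.~2) make $\{M_t(A)\}$ a worthy (in fact orthogonal) martingale measure with covariation measure $Q(dx\,ds)=a^2(Y(s,x))\,dx\,ds$ and dominating measure $(1+a^2(Y(s,x)))\,dx\,ds$, which is finite on compacts again by Proposition \ref{Expbd}.

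It remains to exhibit the driving white noise. Since $a(Y)$ may vanish, one cannot simply divide $M$ by $a(Y)$; instead I would enlarge the probability space to carry an independent space-time white noise $\eus{W}'$ and set
\[
\eus{W}(dx\,ds):=\frac{\mathbf{1}_{\{a(Y(s,x))\neq 0\}}}{a(Y(s,x))}\,M(dx\,ds)+\mathbf{1}_{\{a(Y(s,x))=0\}}\,\eus{W}'(dx\,ds).
\]
A covariance computation --- using that $Q$ puts no mass on $\{a(Y)=0\}$, so on $\{a(Y)\neq0\}$ the covariance of $a(Y)^{-1}M$ is $dx\,ds$, that $\eus{W}'$ has covariance $dx\,ds$, and that the two pieces are orthogonal --- shows $\eus{W}$ is a space-time white noise for the enlarged filtration, relative to which $Y$ stays adapted and each $M^\varphi$ stays a martingale (the latter by independence of $\eus{W}'$). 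Finally $\mathbf{1}_{\{a(Y)=0\}}M\equiv0$ since $Q$ charges no such set, so for every $\varphi\in C_c^\infty$,
\[
\int_0^t\!\int_{\mathbb{R}^d}a(Y(s,x))\varphi(x)\,\eus{W}(dx,ds)=\int_0^t\!\int_{\mathbb{R}^d}\varphi(x)\,M(dx,ds)=M^\varphi(t),
\]
which is exactly the test function formulation \eqref{TFF} with $\epsilon_1=\epsilon_2=1$; reinstating the parameters only carries the constants $\epsilon_1,\epsilon_2$ through unchanged.

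I expect the main obstacle to be the measure-theoretic bookkeeping of the middle step rather than any single estimate: constructing $M_t(A)$ simultaneously in $t$ and $A$ as a bona fide worthy martingale measure, checking joint measurability and the martingale property set-by-set, and verifying that the enlargement producing $\eus{W}$ preserves adaptedness of $Y$ and the martingale structure. These steps are standard in the white-noise literature (Walsh \cite{WA}), but they need care here precisely because $a$ is only continuous with at most linear growth and may degenerate, which is the reason the auxiliary independent noise $\eus{W}'$ is introduced.
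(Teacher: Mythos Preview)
Your proposal is correct and follows essentially the same martingale-representation route as the paper: in the nontrivial direction you build an orthogonal martingale measure from $\{M^\varphi\}$, identify its covariance as $a^2(Y)\,dx\,ds$, and then divide by $a(Y)$ to recover a white noise $\eus{W}$. The only noteworthy difference is in handling the possible degeneracy of $a$: the paper simply assumes $a$ vanishes almost nowhere and defers the general case to a citation (Ikeda--Watanabe \cite{IW} or Doob \cite{Db}, i.e.\ the finite-dimensional analogue), whereas you carry out explicitly the standard enlargement by an auxiliary independent white noise $\eus{W}'$ on $\{a(Y)=0\}$---which is precisely what that citation unpacks to in this setting. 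Your $L^2$-limit construction of $M_t(A)$ from $M^{\varphi_n}$ with $\varphi_n\to\mathbf{1}_A$ is also a bit more careful than the paper's somewhat informal definition of $\eus{M}$ via \eqref{martdef}, but the content is the same.
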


\begin{proof}
If the test function formulation of $e^{\epsilon_1,\epsilon_2}_{\mbox{\tiny heat}} (a,b,\xi)$ holds on
$(\Omega,\eus{F}\{\eus{F}_t\},\mathbb{P})$, then
\begin{equation}
\begin{split}
M^\varphi(t)&\overset{\triangle}{=} (U(t)-\xi,\varphi)-\frac12 \int_0^t(U(s),
\varphi'')ds-\int_0^t(b(U(s)),\varphi)ds\\
&=\int_0^t \int_{\mathbb{R}^d} a(U(s,x))\varphi(x)\eus{W}(dx,ds)\\
&=\int_0^t \int_{{{\mathbb S}}^\varphi}a(U(s,x)) \varphi(x)\eus{W}(dx,ds),
\end{split} \label{TFFa}
\end{equation}
where ${{\mathbb S}}^\varphi\subset\mathbb{R}^d$ is the compact support of
$\varphi$.  It follows from the
assumptions on $a$ and the boundedness of $\varphi$ that $M^\varphi(t)$ is an
$\eus{F}_t$-local martingale under $\mathbb{P}$ and that
\begin{equation}
\langle M^\varphi(\cdot)\rangle_t=\int_0^t
\int_{S^\varphi}a^2(U(s,x))\varphi^2(x)dx\,ds=
\int_0^t \int_{\mathbb{R}^d}a^2(U(s,x))\varphi^2(x)dxds.
\label{QVa}
\end{equation}
For the other direction, assume that $M^\varphi(t)$, as defined in \eqref{TFFa},
is a local martingale on
$(\Omega,\eus{F}\{\eus{F}_t\},\mathbb{P})$ with
quadratic variation given by \eqref{QVa}.  Suppose also that $a$ vanishes almost
nowhere in $(u,\omega)\in\mathbb{R}\times\Omega$ (if this fails we can always
do the same
as in the finite dimensional case cf.~Ikeda and Watanabe \cite{IW} or
Doob \cite{Db}).  Now let $\lambda$ denote Lebesgue measure on
$\eus{B}({\mathbb{R}_+\times\mathbb{R}^d})$ and on $\eus{B}({\mathbb{R}^d})$;
and, for each $t\ge 0$, define the random measure
$\eus{M}_t(A)=\eus{M}([0,t]\times A)$ on the ring
$\eus{R}\overset{\triangle}{=} \{A\in\eus{B}(\mathbb{R}^d);\lambda([0,t]
\times A)<\infty,\ \forall t>0\}$
by the recipe
\begin{equation}
\begin{gathered}
\int_0^t\int_{A}\varphi(x)\eus{M}(dx,ds)
=\int_0^t\int_{\mathbb{R}^d}\varphi(x)\eus{M}(dx,ds)
\overset{\triangle}{=}  M^\varphi(t); \\ \forall A\in\eus{R}\mbox{ with }
\lambda(A\triangle{{\mathbb S}}^\varphi)=0 \mbox{ or such that } A
\supset{{\mathbb S}}^\varphi,\ \forall\varphi\in C_c^\infty(\mathbb{R}^d;\mathbb{R}).
\end{gathered}
\label{martdef}
\end{equation}
By assumption, we have that $M^\varphi(t)$ is a continuous local martingale
for each $\varphi\in C_c^\infty(\mathbb{R}^d;\mathbb{R})$.
Furthermore, if $\varphi_1,\varphi_2\in C_c^\infty(\mathbb{R}^d;\mathbb{R})$
have disjoint supports
(${{\mathbb S}}^{\varphi_1}\cap{{\mathbb S}}^{\varphi_2}=\phi$),
then for any disjoint $A,B\in\eus{R}$ with
$\lambda(A\triangle{{\mathbb S}}^\varphi_1)=0$ and
$\lambda(B\triangle{{\mathbb S}}^\varphi_2)=0$ we
have by the definition of $M^\varphi(t)$, the fact that the second and third
terms in $M^\varphi(t)$'s definition in \eqref{TFFa} are of bounded variation,
\eqref{QVa}, and \eqref{martdef}
\begin{equation}
\begin{split}
&\Big\langle\int_0^{\cdot}\int_{A}\varphi_1(x)\eus{M}(dx,ds),
\int_0^{\cdot}\int_{B}\varphi_2(x)\eus{M}(dx,ds)\Big\rangle_t\\
&=\langle(U(t)-\xi,\varphi_1),(U(t)-\xi,\varphi_2)\rangle_t\\
&=\frac14\left[\langle(U(t)-\xi,\varphi_1+\varphi_2)\rangle_t -
\langle(U(t)-\xi,\varphi_1-\varphi_2)\rangle_t\right]\\
&=\int_0^t\int_{\mathbb{R}^d}a^2(U(s,x)) \varphi_1(x)\varphi_2(x)dx\,ds=0\,.
\end{split} \label{ortho}
\end{equation}
Thus, $\eus{M}$ is a continuous orthogonal local martingale measure \cite{A298}.  By the
quadratic variation assumption on $M^\varphi(t)$, we also have that for each
$\varphi\in C_c^\infty(\mathbb{R}^d;\mathbb{R})$
\begin{equation}
\begin{split}
\int_0^t \int_{\mathbb{R}^d}\varphi^2(x)a^2(U(s,x))dxds&=
\Big\langle\int_0^{\cdot}\int_{\mathbb{R}^d}\varphi(x)\eus{M}(dx,ds)\Big\rangle_t \\
&=\int_0^{t}\int_{\mathbb{R}^d}\varphi^2(x)\nu_{\eus{M}}(dx,ds).
\end{split}
\label{QVB}
\end{equation}
So that the intensity measure $\nu_{\eus{M}}$ of $\eus{M}$ is given by
\begin{equation}
\nu_{\eus{M}}(dx,ds)=a^2(U(s,x))dxds,\mbox{ on sets of the form } [0,t]\times A,\
A\in\eus{R}.
\label{intensity}
\end{equation}
We now show that there is a space-time white noise $\eus{W}$ such that
\begin{equation}
\int_0^t\int_{\mathbb{R}^d}\varphi(x)\eus{M}(dx,ds)
=\int_0^t \int_{\mathbb{R}^d}a(U(s,x))\varphi(x)\eus{W}(dx,ds),\
\forall\varphi\in C_c^\infty(\mathbb{R}^d;\mathbb{R}).
\label{Kalas}
\end{equation}
For each $A\in\eus{R}$, let
$$
\eus{W}_t(A)\overset{\triangle}{=}
\int_0^t\int_{A}\frac{\eus{M}(dx,ds)}{a(U(s,x))}.
$$
$\eus{W}=\{\eus{W}_t(A);t\in\mathbb{R}_+, A\in\eus{R}\}$ is clearly a
continuous orthogonal local martingale measure with intensity
$\nu_{\eus{W}}=\lambda([0,t]\times A),$ where $\lambda$ is Lebesgue measure,
 so it is a white noise and clearly
\eqref{Kalas} holds, completing the proof.
\end{proof}

\end{document}